\newcommand{\cc}{\ensuremath{\mathbb{C}}\xspace}
\newcommand{\rr}{\ensuremath{\mathbb{R}}\xspace}
\newcommand{\abs}[1]{\left| #1 \right|}
\newcommand{\set}[1]{\lbrace #1 \rbrace}
\newcommand{\comp}{\hspace{0.1 mm} \circ \hspace{0.1 mm}}
\newcommand{\rest}[2]{{#1}|_{#2}}
\renewcommand{\bar}{\overline}
\renewcommand{\tilde}{\widetilde}
\newcommand{\inp}[1]{\left\langle #1 \right\rangle}
\DeclareMathOperator{\Res}{Res}
\DeclareMathOperator{\res}{res}
\newcommand{\elli}{\mathcal{A}_{\abs{D}}}
\DeclareMathOperator{\At}{At}
\DeclareMathOperator{\Hess}{Hess}
\DeclareMathOperator{\Fr}{Fr}
\DeclareMathOperator{\lin}{lin}
\DeclarePairedDelimiter\floor{\lfloor}{\rfloor}
\renewcommand{\phi}{\varphi}
\declaretheorem[style=definition,qed=$\diamondsuit$]{definition}
\declaretheorem[style=definition,qed=$\triangle$,sibling=definition]{example}
\declaretheorem[style=plain,sibling=definition]{theorem}
\declaretheorem[style=plain,sibling=definition]{lemma}
\declaretheorem[style=plain,sibling=definition]{proposition}
\declaretheorem[style=plain,sibling=definition]{corollary}
\declaretheorem[style=definition,qed=$\diamondsuit$,sibling=example]{claim}
\declaretheorem[style=definition,qed=$\diamondsuit$,sibling=claim]{remark}
\newtheorem*{claim*}{Claim}
\numberwithin{theoremalpha}{section}
\numberwithin{equation}{section}
\numberwithin{definition}{section}
\numberwithin{theorem}{section}
\numberwithin{proposition}{section}
\numberwithin{lemma}{section}
\numberwithin{example}{section}
\numberwithin{remark}{section}
\numberwithin{corollary}{section}
\title{The cohomology of the elliptic tangent bundle}
\author{Aldo Witte}
\address{Department of Mathematics, Utrecht University, 3508 TA Utrecht, The Netherlands}
\email{g.a.witte@uu.nl}
\newtheoremstyle{named}{}{}{\itshape}{}{\bfseries}{.}{.5em}{\thmnote{#3}#1}
\theoremstyle{named}
\begin{document}
\begin{abstract}
In this note we compute the cohomology of the elliptic tangent bundle, a Lie algebroid introduced in \cite{CG17,CKW20} used to describe singular symplectic forms arising from generalized complex geometry.
\end{abstract}
\maketitle
\tableofcontents

\section*{Introduction}
Generalized complex structures \cite{H03,Gua07} are a simultaneous generalisation of symplectic and complex structures. Infinitesimally these structures decompose each tangent space as the direct sum of a complex and a symplectic vector space. However, the number of complex directions, called the \emph{type}, might vary throughout the manifold. A well-behaved and interesting class of structures for which the type changes, called \emph{self-crossing stable generalized complex structures}, was defined in \cite{CG17,CKW20}. These are generalized complex structures which have type zero on an open and dense set and allow the type to change along an immersed codimension-two submanifold.

In \cite{CKW20} it is shown that these self-crossing stable generalized complex structures are in one-to-one correspondence with (a subset of) \emph{self-crossing elliptic symplectic structures}. These are symplectic structures with well-behaved singularities, and can be described as Lie algebroid symplectic forms for a certain Lie algebroid: the \emph{self-crossing elliptic tangent bundle}.

Much research on Lie algebroid symplectic structures has been carried out over the past few years. Examples include b-symplectic \cite{GMP14}, scattering-symplectic \cite{Lan21}, c-symplectic \cite{MS20} (or self-crossing b-symplectic). In the algebraic/complex setting holomorphic log symplectic structures, which are holomorphic forms with logarithmic singularities in the sense of \cite{Del71}, are well-studied.

In \cite{CG17} deformations of stable generalized complex structures with embedded type change locus and the corresponding elliptic symplectic structures are described. It is shown there that deformations are completely controlled by the cohomology of the relevant Lie algebroid.\footnote{The description of deformations of stable generalized complex structures was obtained independently in \cite{G13}.}

 If we allow the type change locus to be immersed this will no longer be the case, still the computation of the Lie algebroid cohomology is the first step in understanding the deformation theory.

In this note we compute the cohomology of the self-crossing elliptic tangent bundles. This cohomology is determined by the topological data of the manifold, of the type change locus and of the normal bundle.\\
\textbf{Organisation of the paper:} This paper is organized as follows: In Section \ref{sec:1} we recall the notion of self-crossing elliptic divisors and their associated Lie algebroids. In Section \ref{sec:2} we describe the geometric structure a self-crossing elliptic divisor induces on the directions normal to its degeneracy locus. In Section \ref{sec:residues} we will use this geometric structure to define residue maps, which we will use in Section \ref{sec:4} to compute the Lie algebroid cohomology of the elliptic tangent bundle. In Section \ref{sec:5} we compute this cohomology in some explicit examples. We end this section by giving an outlook on the deformation theory of self-crossing stable generalized complex structures.

{\bf Acknowledgements.}
The author would like to thank  his supervisor Gil Cavalcanti and Ralph Klaasse for useful discussions. The author was supported by the NWO through the Utrecht Geometry Centre Graduate Programme.

\section{Elliptic divisors}\label{sec:1}
The singularities we encounter are governed by the notion of a real divisor, a notion which is inspired by the notion of divisor in algebraic geometry. In this section we will recall the definition of elliptic divisors, and their associated Lie algebroids, the elliptic tangent bundles. We will keep our discussion brief and refer to \cite{CG17,CKW20} for more information.

\begin{definition}
A \textbf{real divisor} $(R,q)$ consists of a real line bundle $R$ together with a section $q$ with nowhere dense zero-set. Its \textbf{associated ideal} $I_{q}$ is the image of the map $q : \Gamma(R^*) \rightarrow \rr$.
\end{definition}
An \textbf{isomorphism} of divisors $(L_i,q_i)$ is a vector bundle isomorphism $\Phi : R_1 \rightarrow R_2$ covering the identity and intertwining the sections, $\Phi^*(q_2) = q_1$. A real divisor is, up to isomorphism, determined by its associated ideal and therefore we may use ideals and divisors interchangeably.
\begin{definition}[\cite{CG17}]\label{def:selliptic}
A \textbf{smooth elliptic divisor} is a real divisor $(R,q)$ such that the critical set of $q$ is a codimension two submanifold along which the normal Hessian is positive definite. We call $D = q^{-1}(0)$ the \textbf{vanishing locus}.
\end{definition}
The normal Hessian is the leading part of the Taylor expansion of $q$ around $D$, and defines a section $\Hess^{\nu}q \in \Gamma(\text{Sym}^2N^*D \otimes R)$. Phrased in other words, Definition \ref{def:selliptic} states that $q$ locally defines a particular type of Morse-Bott function.
\begin{definition}[\cite{CKW20}]\label{def:xelliptic}
A \textbf{ self-crossing elliptic divisor} is a real divisor $\abs{D} = (R,q)$ on $M$ such that for every point $p\in M$ there exists a neighbourhood $U$ of $p$ such that
\begin{equation*}
	I_{q}(U) = I_1\cdot \ldots \cdot I_j,
\end{equation*}
where the $I_1,\ldots,I_j$ are smooth elliptic divisors with transversely intersecting vanishing loci.
\end{definition}
We call $I_1,\ldots,I_j$ as above a choice of \textbf{local normal crossing} elliptic divisors near a point $p$. If $I_q$ is globally of the form $I_1\cdot \ldots \cdot I_j$, then say that $I_q$ is a \textbf{global normal crossing} elliptic divisor.

In what follows if we write ``elliptic divisor'' it is understood to have self-crossings, if it is a smooth elliptic divisor we will explicitly state this.

The vanishing locus of an elliptic divisor is an immersed submanifold, stratified by the amount of submanifolds intersecting:
\begin{definition}
Let $I_{\abs{D}}$ be an elliptic divisor on $M$. The \textbf{intersection number} of a point $p \in M$ is the minimum of the integers $j$ from Definition~\ref{def:xelliptic} over all neighbourhoods $U$ of $p$. The \textbf{intersection number of the divisor} is the maximum of the intersection numbers of all points $p \in M$. If $I_{\abs{D}}$ has intersection number equal to $n$, the sets $D(j)$ of points of intersection number at least $j$ induce a filtration of $M$:
\begin{equation*}
	M = D(0) \supset D(1) = D \supset \cdots \supset D(n) \supset D(n+1) = \emptyset,
\end{equation*}
with induced \textbf{stratification} $D[j]:= D(j)/D(j+1)$. These strata are embedded submanifolds and consist of the points with intersection number precisely $j$.
\end{definition}
The fact that the $D(i)$ indeed induce a stratification follows readily from the normal forms of the elliptic divisor in Lemma \ref{lem:ellinormform} below. 

Restricting the divisor to the set of points with at most a given intersection number produces another divisor:
\begin{lemma}\label{lem:restrict}
If $I_{\abs{D}}$ is an elliptic divisor with intersection number $n$ and $i \leq n$, then the restriction $\rest{I_{\abs{D}}}{M\backslash D(i+1)}$ defines a divisor with intersection number $i$.
\end{lemma}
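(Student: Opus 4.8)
The plan is to reduce everything to a local statement and then patch, using the fact (which the paper attributes to the forthcoming Lemma \ref{lem:ellinormform}) that near any point an elliptic divisor looks like a product of smooth elliptic divisors with transversely intersecting vanishing loci. First I would observe that restriction of a divisor to an open subset is well-defined: $\rest{I_{\abs{D}}}{M\backslash D(i+1)}$ is just the restriction of the line bundle $R$ and section $q$ to the open set $U_i := M\backslash D(i+1)$, and the zero set remains nowhere dense. So the only thing to check is that the restricted divisor is again a self-crossing elliptic divisor and that its intersection number is exactly $i$. The key geometric input is that $D(i+1)$ is closed — this follows since the $D(j)$ form a (decreasing) filtration by closed sets, being the closures of the strata $D[j]$ — so $U_i$ is genuinely open and the local normal crossing picture is inherited on $U_i$.

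The main step is the local computation. Fix $p \in U_i$ and pick a neighbourhood $U \subseteq M$ on which $I_q(U) = I_1 \cdots I_j$ with the $I_k$ smooth elliptic divisors whose vanishing loci $D_k$ meet transversely, and with $j$ equal to the intersection number of $p$; since $p \in U_i$ we have $j \le i$. Shrinking $U$ so that it meets only those $D_k$ through $p$, I would argue that on $U \cap U_i$ the factors $I_k$ with $p \in D_k$ restrict to smooth elliptic divisors with transversely intersecting vanishing loci, and the factors with $p \notin D_k$ become units (nonvanishing) after further shrinking, hence can be absorbed. Thus $\rest{I_q}{U \cap U_i}$ is again a product of at most $j \le i$ smooth elliptic divisors with transverse vanishing loci, which verifies Definition \ref{def:xelliptic} for the restricted divisor at every point of $U_i$. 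The only subtle point — and what I expect to be the main obstacle — is to check the "transversely intersecting" clause is preserved and that no new crossings are created: this is where one must use that $D_k \cap U_i$ for the surviving factors is obtained by simply removing the deeper strata, and that removing $D(i+1)$ does not glue together previously distinct sheets; again this is controlled by the normal form.

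Finally I would pin down the intersection number of the restricted divisor. By the above, every point of $U_i$ has intersection number (computed inside $U_i$) at most $i$, so the intersection number of $\rest{I_{\abs{D}}}{U_i}$ is $\le i$. For the reverse inequality, note that a point $p$ in the stratum $D[i]$ lies in $U_i$ (since $D[i] \subseteq D(i) \subseteq U_i$, as $D(i)$ is disjoint from $D(i+1)$), and near such $p$ the divisor is genuinely a product of $i$ smooth elliptic divisors with transverse vanishing loci and no smaller local presentation exists — this is exactly the statement that $p$ has intersection number $i$ in $M$, and removing points of $D(i+1)$, all of which are distinct from $p$, cannot lower it. Hence the intersection number of $\rest{I_{\abs{D}}}{U_i}$ is exactly $i$, provided $D[i] \neq \emptyset$, which holds because $i \le n$ and $n$ is by definition realized. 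This completes the proof modulo the normal-form lemma.
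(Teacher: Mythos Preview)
The paper does not actually supply a proof of Lemma~\ref{lem:restrict}; it is stated as an observation and left to the reader, relying (implicitly) on the local normal form of Lemma~\ref{lem:ellinormform}. Your argument is exactly the natural one and is correct in substance: upper semicontinuity of the intersection number (from the normal form) gives that $D(i+1)$ is closed, the local product description is inherited on the open set $U_i$, and the existence of points of intersection number exactly $i$ near any point of $D[n]$ shows the restricted divisor attains intersection number $i$.

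One small slip to fix: in your last paragraph you write ``$D[i] \subseteq D(i) \subseteq U_i$, as $D(i)$ is disjoint from $D(i+1)$''. This is false as stated, since $D(i+1) \subseteq D(i)$ by definition of the filtration. What you want is simply $D[i] = D(i)\setminus D(i+1) \subseteq M\setminus D(i+1) = U_i$. The conclusion you draw is correct; only the parenthetical justification needs to be replaced.
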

Note that on a manifold $M^m$ the maximum intersection number of an elliptic divisor is $\floor{m/2}$. Using the Morse--Bott lemma inductively we can establish the following local normal form for elliptic divisors:
\begin{lemma}[\cite{CKW20}]\label{lem:ellinormform}
Let $\abs{D} = (R,q)$ be an elliptic divisor with intersection number $n \leq \floor{m/2}$ on $M^{m}$, and let $x \in D[i]$ with $i \leq n$. Then there exists coordinates $(x_1,y_1,\ldots, x_i,y_i,w_{2i+1},\ldots,w_{2i+l})$ around $x$ such that $(R,q)$ is isomorphic to the elliptic divisor defined by the ideal $I_{\abs{D}}=\inp{(x_1^2+y_1^2)\cdot \ldots \cdot (x_i^2+y_i^2)}$.
\end{lemma}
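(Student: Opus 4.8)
The plan is to straighten the vanishing loci of a choice of local normal-crossing components simultaneously, and then to normalise each component by a positive-definite change of coordinates in its normal directions. Since a real divisor is, up to isomorphism, determined by its associated ideal, it is enough to produce coordinates in which $I_{\abs{D}}$ is generated by $(x_1^2+y_1^2)\cdots(x_i^2+y_i^2)$.

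First I would fix the local picture. As $x\in D[i]$, pick a neighbourhood $U$ of $x$ realising the intersection number, so $I_{\abs{D}}(U)=I_1\cdots I_i$ with the $I_k$ smooth elliptic divisors whose vanishing loci $D_k$ intersect transversely; by minimality of $i$ each $D_k$ contains $x$ (otherwise shrinking $U$ would exhibit fewer factors). Trivialising $R$ over $U$, write $q=u\,q_1\cdots q_i$ with $u$ nowhere vanishing and $q_k$ a generator of $I_k$ with positive-definite normal Hessian. Because all multi-intersections of $D_1,\dots,D_i$ are transverse, the classical simultaneous straightening of transverse submanifolds gives coordinates $(x_1,y_1,\dots,x_i,y_i,w_{2i+1},\dots,w_{2i+l})$, $l=m-2i$, in which $D_k=\{x_k=y_k=0\}$ for every $k$. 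Writing $v_k=(x_k,y_k)$, the function $q_k$ and its differential vanish along $\{v_k=0\}$ — the former because $D_k$ is its zero set, the latter because $D_k$ is its critical set — so Hadamard's lemma yields $q_k=v_k^{\top}A_k\,v_k$ for a smooth field of symmetric $2\times2$ matrices $A_k$ with $A_k(x)$ the (positive-definite) normal Hessian $\Hess^{\nu}q_k$ at $x$; after shrinking $U$, $A_k$ stays positive definite.

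Next I would normalise: replace $v_k$ by $A_k^{1/2}v_k$, using the smooth positive square root, simultaneously for $k=1,\dots,i$. This map fixes the $w$-coordinates, maps each plane $\{v_{k'}=0\}$ to itself, and has invertible (block-diagonal) differential at $x$, since the derivatives of $A_k$ in any direction are multiplied by $v_k$ and hence vanish on $\{v_k=0\}$; therefore it is a diffeomorphism near $x$. In the new coordinates $q_k=x_k^2+y_k^2$, whence $q=u\prod_{k=1}^i(x_k^2+y_k^2)$ and $I_{\abs{D}}=\inp{(x_1^2+y_1^2)\cdots(x_i^2+y_i^2)}$, as claimed.

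The main obstacle is the compatibility of the two normalising steps: one must be sure that rescaling $v_k$ to absorb $A_k$ does not perturb the other hypersurfaces $D_{k'}$, which is exactly why the coordinate change in the last step must preserve each coordinate plane — the block-triangular structure is what makes this work. Equivalently, one can run the argument by induction on $i$, applying the Morse--Bott lemma to $q_1$ and the inductive hypothesis to the elliptic divisor generated by $q_2\cdots q_i$, at the cost of having to re-straighten $D_1$ afterwards; I would favour the direct simultaneous version. The one further routine point is the existence of the simultaneously straightened chart, i.e.\ that pairwise transverse submanifolds with transverse multi-intersections are locally standard, which is classical.
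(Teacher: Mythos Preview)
Your proposal is correct. The paper itself does not give a proof; it cites \cite{CKW20} and only remarks that the result follows ``using the Morse--Bott lemma inductively.'' Your argument is a fleshed-out variant of exactly this idea: you straighten all the transverse loci $D_k$ at once, write each $q_k$ via Hadamard as $v_k^{\top}A_kv_k$, and then absorb $A_k$ by the positive square root, which is precisely the standard proof of the Morse--Bott lemma carried out simultaneously for the $i$ factors. You even note the inductive alternative the paper alludes to. The one stylistic point: what makes the simultaneous change of variables harmless is not a ``block-triangular'' structure (the map is not triangular, since each $A_k$ depends on all coordinates) but the simpler fact that $\tilde v_k=A_k^{1/2}v_k$ vanishes exactly where $v_k$ does, so every plane $\{v_{k'}=0\}$ is preserved and $q_k=|\tilde v_k|^2$ tautologically; your computation of the differential at $x$ already shows invertibility.
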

We call $I_{\abs{D}}$ as above, the \textbf{standard elliptic divisor} with intersection number $i$ on $\rr^{2i} \times \rr^l$.

If $(R,q)$ is an elliptic divisor, then $R$ is trivialisable away from the codimension-two submanifold $D[1]$ and thus globally trivialisable:
\begin{lemma}
Let $(R,q)$ be an elliptic divisor on $M$. Then $R$ is trivialiasible, and consequently $I_q = \inp{f}$ for some $f \in C^{\infty}(M)$.
\end{lemma}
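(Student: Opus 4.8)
The claim is that if $(R,q)$ is an elliptic divisor on $M$, then $R$ is trivialisable. The plan is to exhibit a nowhere-vanishing global section of $R$. The obstacle to triviality of a real line bundle is its first Stiefel--Whitney class $w_1(R) \in H^1(M;\zz/2)$, so it suffices to show this class vanishes; equivalently, $R$ is orientable. The strategy I would take is to use the positivity of the normal Hessian: away from $D[1]$ the section $q$ itself is a nowhere-vanishing section, so $R$ is trivial over $M \setminus D[1]$, and the only possible obstruction is ``concentrated'' along the codimension-two stratum $D[1]$. Since a closed subset of codimension at least two cannot support a nontrivial class in $H^1$, the bundle must be globally trivial.

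More concretely, first I would observe that $q \in \Gamma(R)$ vanishes exactly on $D = D[1] \cup D(2)$, and that $D(2)$ has codimension at least four. Hence $q$ restricts to a nowhere-vanishing section of $R$ over the open set $M \setminus D(1)$, which has complement of codimension two; this already shows $\rest{R}{M\setminus D(1)}$ is trivial. Second, I would argue that the inclusion $M \setminus D(1) \hookrightarrow M$ induces an injection on $H^1(-;\zz/2)$: this follows because $D = D(1)$ is a (stratified, immersed) submanifold of codimension two, so by transversality any loop in $M$ can be pushed off $D$, giving surjectivity of $\pi_1(M\setminus D(1)) \to \pi_1(M)$, and hence injectivity of $H^1(M;\zz/2) \to H^1(M\setminus D(1);\zz/2)$. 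Since $w_1(R)$ maps to $w_1(\rest{R}{M\setminus D(1)}) = 0$ under this injection, $w_1(R) = 0$, so $R$ is trivialisable.

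Alternatively, and perhaps more in the spirit of the paper's explicit normal forms, I would build the trivialising section by hand. Using Lemma~\ref{lem:ellinormform}, near any point of $D[i]$ the section $q$ is, up to isomorphism, $(x_1^2+y_1^2)\cdots(x_i^2+y_i^2)$, and one can also construct a local nowhere-vanishing section of $R$ directly (for instance, take the local trivialisation in which $q$ has this form). The transition functions between such local trivialisations, computed on overlaps where both normal forms are valid, are forced to be positive: on the dense open set $M \setminus D$ a transition function is the ratio of two positive functions (the coefficient of the positive-definite Hessian controls the sign), hence positive, and by density positive everywhere. A positive real-valued cocycle is a coboundary (take positive square roots, or just note the structure group reduces to $\rr_{>0}$ which is contractible), so the local sections patch to a global nowhere-vanishing section of $R$.

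The main obstacle is making the sign/positivity argument rigorous: one must check that the transition functions relating two Morse--Bott normal forms for the same elliptic divisor are genuinely positive, not merely nonzero, and that this persists across the degeneracy locus. This is where the positive-definiteness of the normal Hessian in Definition~\ref{def:selliptic} is essential — without it $R$ could have nontrivial $w_1$ — and it is the one place the argument uses more than the bare topology of $D$. Once triviality of $R$ is established, the final assertion $I_q = \inp{f}$ is immediate: fixing a trivialisation $R \cong M \times \rr$ identifies $q$ with a function $f \in C^\infty(M)$, and by definition $I_q = \inp{f}$.
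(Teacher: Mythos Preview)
Your proposal is correct, and your first argument is exactly the paper's approach: the paper simply remarks (in the sentence preceding the lemma, with no formal proof) that $R$ is trivialisable away from the codimension-two locus and hence globally trivialisable, which is precisely your $w_1$/transversality argument spelled out in detail. Your alternative route via positive transition functions from the Morse--Bott normal forms is not in the paper but is also sound; it trades the clean topological obstruction argument for a more explicit construction, at the cost of having to verify positivity across the degeneracy locus as you note.
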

Although $R$ is trivialisable, it is not canonically so and thus we prefer to work with the ideal $I_q$ rather then one particular function generating it.

\begin{remark}[Metrics]\label{rem:metric}
Let $(R,q)$ be a smooth elliptic divisor with vanishing locus $D$, and let $f$ be any global function generating $I_{\abs{D}}$. Because $f$ is nowhere vanishing on $M\backslash D$, and $D$ has codimenion-two in $M$ the sign of $f$ is constant on the entirety of $M$. Consequently, we can always choose a non-negative representative of the elliptic ideal. Therefore the normal Hessian of $f$, $\Hess^{\nu}f \in \Gamma(\text{Sym}^2N^*D)$ is positive definite, and thus defines a metric.
\end{remark}
%
%
%

\begin{definition}
Let $\abs{D} = (R,q)$ be an elliptic divisor. The vector fields preserving $I_{\abs{D}}$ define a Lie algebroid $\mathcal{A}_{\abs{D}} \to TM$, called the \textbf{elliptic tangent bundle}.
\end{definition}
The existence of this Lie algebroid is established through the Serre--Swan theorem: if $x \in D[i]$, then in the coordinates of Lemma \ref{lem:ellinormform} vector fields preserving $I_{\abs{D}}$ are given by
\begin{equation*}
\Gamma(\mathcal{A}_{\abs{D}}) = \inp{r_1\partial_{r_1},\partial_{\theta_1}, \ldots,r_i\partial_{r_i},\partial_{\theta_i},\partial_{w_{2i+1}},\ldots,\partial_{w_{2i+l}}},
\end{equation*}
where $r_j\partial_{r_j} := x_j\partial_{x_j}+y_j\partial_{y_j}$ and $\partial_{\theta_j} := x_j\partial_{y_j}-y_j\partial_{x_j}$. Consequently, vector fields preserving $I_{\abs{D}}$ define a locally free sheaf and thus induce a Lie algebroid.

Lie algebroid forms for the elliptic tangent bundle are locally given by
\begin{equation*}
\Gamma(\elli^*) = \inp{ d\log r_1,d\theta_1,\ldots, d\log r_i, d\theta_i,dw_{2i+1},\ldots,dw_{2i+l}},
\end{equation*}
with $d\log r_j = (x_j^2+y_j^2)^{-1}(x_jdx_j+y_jdy_j)$ and $d\theta_j = (x_j^2+y_j^2)^{-1}(x_jdy_j-y_jdx_j)$.

\begin{definition}
Let $\abs{D} = (R,q)$ be an elliptic divisor. A \textbf{(self-crossing) elliptic symplectic structure} is a Lie algebroid symplectic form for the elliptic tangent bundle: That is, a two-form $\omega \in \Omega^2(\elli)$ which is non-degenerate ($\omega^\flat : (\elli)_x \rightarrow (\elli)_x^*$ is an isomorphism) and closed ($d_{\elli}\omega = 0$).
\end{definition}
In \cite{CKW20} we study these structures and show that a subset corresponds to a class of generalized complex structures, called self-crossing stable.

\section{Geometric structure on the normal bundle}\label{sec:2}
In this section we study the geometric structure present on the normal bundles of the strata of the vanishing locus of an elliptic divisor. In the next section, this geometric structure will be used to describe the Lie algebroid cohomology of the elliptic tangent bundle.\\

Remark \ref{rem:metric} explains how a smooth elliptic divisor $(R,q)$ induces a metric on the normal bundle to $D$. The metric depends on the choice of particular trivialisation of $R$, but the conformal class does not. Phrased in another way, $ND$ inherits a canonical $O(2)\times\rr_+$ structure group reduction. For self-crossing elliptic divisors we have the following result:

\begin{proposition}\label{prop:canstrucred}
Let $I_{\abs{D}}$ be an elliptic divisor with intersection number $n$. Then $ND[n]$ admits a canonical structure group reduction to $(O(2)\times \rr_+)^n\rtimes S_n$.
\end{proposition}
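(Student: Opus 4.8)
The plan is to build the structure group reduction locally from the normal-crossing data and then check that the ambient permutation ambiguity is exactly what gets absorbed by the semidirect factor $S_n$. Concretely, fix $x \in D[n]$ and a neighbourhood $U$ on which $I_{\abs{D}} = I_1 \cdots I_n$ with the $I_k = (R_k, q_k)$ smooth elliptic divisors whose vanishing loci $D_k$ meet transversely, with $D[n] \cap U = D_1 \cap \dots \cap D_n$. Transversality gives a canonical splitting $N D[n]|_U \cong \bigoplus_{k=1}^n (N D_k)|_{D[n] \cap U}$, and Remark \ref{rem:metric} applied to each smooth factor $I_k$ endows each $N D_k$ with a canonical conformal class together with a compatible orientation-free inner product up to positive scaling — i.e.\ an $O(2) \times \rr_+$ reduction of the $\GL(2,\rr)$ frame bundle of each rank-two summand. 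Taking the direct sum of these reductions over $k$ produces, on the overlap $U$, a reduction of the structure group of $N D[n]$ from $\GL(2n,\rr)$ to $(O(2) \times \rr_+)^n$ — but this uses the chosen ordering and labelling of the local factors $I_1, \dots, I_n$.

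The next step is to show this locally defined reduction is independent of the choices, up to the action of $S_n$ permuting the $n$ blocks. The factorisation $I_q|_U = I_1 \cdots I_n$ into smooth elliptic divisors with transverse vanishing loci is unique up to reordering: each $I_k$ is recovered as the elliptic ideal cut out on one of the $n$ local branches of $D$ through $x$, and the set of branches is intrinsic (it is the set of connected components of $D(1) \cap U'$ for $U'$ small, or equivalently the irreducible local components of the vanishing locus). Hence any two normal-crossing presentations near $x$ differ by a permutation of the $I_k$ together with isomorphisms of each $R_k$; the isomorphisms of $R_k$ act trivially on the conformal class (Remark \ref{rem:metric}), and the permutation acts by the corresponding permutation of the summands of $N D[n]$. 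Therefore the $(O(2) \times \rr_+)^n$-reduction is canonical up to the $S_n$-action on blocks, which is precisely the statement that one has a canonical reduction to the wreath-type group $(O(2) \times \rr_+)^n \rtimes S_n \subset \GL(2n, \rr)$.

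Globalising is then formal: the local reductions glue to a reduction of the $\GL(2n,\rr)$-frame bundle of $N D[n]$ along $D[n]$, because on overlaps the transition functions of $N D[n]$ necessarily lie in the subgroup that preserves the (unordered) collection of conformal structures on the branches — by the uniqueness of the local factorisation just discussed — and this subgroup is exactly $(O(2) \times \rr_+)^n \rtimes S_n$. One should note $D[n]$ is an embedded submanifold (as recorded after Definition of the stratification), so this is an honest structure-group reduction of a genuine vector bundle over a manifold.

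The main obstacle I expect is the bookkeeping around the $S_n$-factor: one must argue carefully that the ambiguity in the local normal-crossing decomposition is \emph{exactly} a permutation (no more, no less) and that this permutation ambiguity cannot be removed globally — i.e.\ that the branches of $D$ through points of $D[n]$ need not be globally labellable, so the reduction genuinely lands in the semidirect product rather than in $(O(2)\times\rr_+)^n$ itself. This is where the immersed (rather than embedded) nature of the type-change locus enters, and it is cleanest to phrase it as: the monodromy of the $n$-element set of local branches as one moves around $D[n]$ defines a homomorphism $\pi_1(D[n]) \to S_n$, and the structure group reduction is to the subgroup of $\GL(2n,\rr)$ covering this monodromy, which is contained in $(O(2)\times\rr_+)^n \rtimes S_n$. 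The rest — the conformal/metric part — is an immediate consequence of Remark \ref{rem:metric} applied factorwise and requires no new work.
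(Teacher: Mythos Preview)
Your argument is correct and follows essentially the same route as the paper's proof: split $ND[n]$ via transversality into the normal bundles of the local branches, equip each rank-two summand with the conformal metric coming from the normal Hessian (Remark~\ref{rem:metric}), and observe that the only remaining ambiguity is the unordered labelling of the branches, which contributes the $S_n$ factor. The paper's proof is terser---it simply declares the relevant frames and notes the two sources of ambiguity---while you supply extra justification (uniqueness of the local factorisation up to permutation, the monodromy interpretation of why $S_n$ cannot in general be dropped); these are helpful elaborations but not a different strategy.
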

\begin{proof}
For a point $x\in D[n]$, let $I_{\abs{D_1}},\ldots,I_{\abs{D_n}}$ be any choice of local normal crossing divisors and let the $f_i$ be any representatives of the $I_{\abs{D_i}}$, which we may choose to be all non-negative. Because the $D_i$ all intersect transversely, we have 
\begin{equation*}
	N_x(D[n]) \simeq \bigoplus_{i=1}^n \rest{N_x(D_i)}{D[n]}.
\end{equation*}
If $u_i = (u_{i_1},u_{i_2})$ forms a local frame for $\rest{ND_i}{D[n]}$, then $(u_{1_1},u_{1_2},\ldots,u_{n_1},u_{n_2})$ forms a local frame for $N(D[n])$. We consider all such local frames for which $u_i$ is orthonormal with respect to $\Hess^{\nu}(f_i)$. There are two important points to remark. First, the choice of representatives $f_i$ is not canonical. Secondly, the ordering of the local divisors is not well-defined, and instead there is an $S_n$-symmetry present which permutes the divisors. Therefore we consider the frames with respect to all these choices of representatives $f_i$ and all orderings for the local divisors. This provides a reduction of the structure group to $(O(2)\times \rr_+)^n\rtimes S_n$.
\end{proof}
\begin{corollary}\label{prop:strucredfirst}
Let $I_{\abs{D}}$ be an elliptic divisor with intersection number $n$, then 
\begin{itemize}
\item the structure group of $ND[n]$ reduces to $O(2)^n \ltimes S_n$.
\item If $I_{\abs{D}}$ is a global normal crossing divisor, then a choice of non-negative representatives $f_1,\ldots,f_n$ of the smooth elliptic divisor ideals induces a further structure group reduction to $O(2)^n$.
\item If $D[1]$ is furthermore co-orientable, then a further choice of co-orientation of $D[1]$ induces a structure group reduction to $T^n$.
\end{itemize}

\end{corollary}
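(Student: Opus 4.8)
The plan is to derive all three reductions from Proposition~\ref{prop:canstrucred}, which already provides a reduction of the structure group of $ND[n]$ to $G:=(O(2)\times\rr_+)^n\rtimes S_n$. For the first item, the key point is that $\rr_+$ is contractible. The subset $\rr_+^n\subset G$ consisting of elements with trivial $O(2)$-components is a normal subgroup: conjugation by an element of $O(2)^n$ fixes it componentwise (the $\rr_+$-factors being abelian and the $O(2)$-factors cancelling), and conjugation by $S_n$ merely permutes its factors. The quotient $G/\rr_+^n$ is $O(2)^n\rtimes S_n$, and the extension splits via the evident inclusion $O(2)^n\rtimes S_n\hookrightarrow G$, so the coset space $G/(O(2)^n\rtimes S_n)$ is diffeomorphic to the contractible space $\rr_+^n$. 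Hence the associated bundle over $D[n]$ with this fibre, built from the principal $G$-bundle of Proposition~\ref{prop:canstrucred}, admits a global section, and such a section is precisely a reduction of the structure group to $O(2)^n\rtimes S_n$. This reduction is not canonical; it amounts to selecting a metric in each of the canonical conformal classes.

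For the second item, a global normal crossing structure provides globally defined and globally labelled smooth elliptic divisors $I_{\abs{D_1}},\dots,I_{\abs{D_n}}$ with vanishing loci $D_1,\dots,D_n$. The decomposition $ND[n]\simeq\bigoplus_{i=1}^n\rest{ND_i}{D[n]}$ from the proof of Proposition~\ref{prop:canstrucred} is then a global, \emph{ordered} splitting into rank-two subbundles, which removes the $S_n$-ambiguity. A choice of non-negative representative $f_i$ of each $I_{\abs{D_i}}$ produces, by Remark~\ref{rem:metric}, an honest metric $\Hess^\nu f_i$ on $ND_i$ rather than merely a conformal class, which removes the $\rr_+$-factor from each block. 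Together these cut the structure group of $ND[n]$ down to an ordered product of $n$ copies of $O(2)$, i.e. to $O(2)^n$.

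For the third item, fix a co-orientation of $D[1]$. For each $i$, the open set $D_i\cap D[1]\subset D_i$ of points of $D_i$ lying on no other component is dense, has complement $D_i\backslash(D_i\cap D[1])$ of codimension two in $D_i$ (by transversality of the intersections), and near such a point the stratum $D[1]$ coincides with $D_i$, so $\rest{ND[1]}{D_i\cap D[1]}=\rest{ND_i}{D_i\cap D[1]}$. Thus the chosen co-orientation restricts to a co-orientation of $ND_i$ over $D_i\cap D[1]$; since $D_i\backslash(D_i\cap D[1])$ has codimension two, this extends uniquely over $D_i$, for the same reason that a real line bundle is determined by its restriction to the complement of a codimension-two set (as used earlier for $R$). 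Orienting every summand $ND_i$ reduces each $O(2)$-factor of the splitting from the second item to $SO(2)$, giving the structure group $SO(2)^n=T^n$.

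The only places requiring genuine care are the semidirect-product bookkeeping in the first item and the codimension-two extension of the co-orientation in the third; the conceptual content is the observations that a global normal crossing structure is exactly what trivialises the permutation symmetry $S_n$, and that co-orienting the single stratum $D[1]$ simultaneously co-orients every component $D_i$.
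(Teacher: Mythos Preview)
Your proof is correct and follows essentially the same approach as the paper's own argument: contractibility of $\rr_+^n$ for the first item, explicit orthonormal frames for the chosen metrics in the second, and the codimension-two extension of the co-orientation in the third. Your version is simply more explicit about the group-theoretic bookkeeping (normality of $\rr_+^n$, splitting of the extension, sections of the associated bundle) where the paper just asserts that the quotient is contractible and the reduction therefore exists.
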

\begin{proof}~
\begin{itemize}
\item The quotient of $(O(2)\times \rr_+)^n\rtimes S_n$ by $O(2)^n \ltimes S_n$ is $(\rr_+)^n$, and thus contractible. Therefore the required structure group reduction exists.
\item Let $I_{\abs{D_1}},\ldots,I_{\abs{D_n}}$ be global smooth elliptic divisors for $I_{\abs{D}}$. We consider frames $(u_1,\ldots,u_n)$ of $ND[n]$, where $u_i$ is a local frame of $\rest{ND_i}{D[n]}$ orthonormal with respect to $\Hess^{\nu}(f_i)$. This will give the required $O(2)^n$-reduction.
\item The co-orientation on $D[1]$ induces a co-orientation on each $D_i$ away from a codimension-two submanifold, and therefore a co-orientation on the entirety of $D_i$. Proceeding as in the previous point with oriented frames provides a structure group reduction to $SO(2)^n$.
\end{itemize}
\end{proof}
\begin{remark}
By Lemma \ref{lem:restrict} the restriction of an elliptic divisor $I_{\abs{D}}$ to $M\backslash D(i+1)$ is an elliptic divisor of intersection number $i$. The highest stratum of this divisor is precisely $D[i]$, and consequently we can apply the results in this section to obtain structure group reduction of $ND[i]$.
\end{remark}

\section{Residue maps}\label{sec:residues}
In this section we describe residue maps for elliptic divisors. These are maps which pick out the coefficient of a singular generator of a Lie algebroid form, much akin to the residue of a meromorphic differential form. In \cite{K18} a general theory of residue maps of Lie algebroids is described. We will first recall the general definition and then specialise to the case of elliptic divisors.\\

Let $0 \rightarrow A \rightarrow B \rightarrow C \rightarrow 0$ be a short exact sequence of vector spaces with $l = \dim A$. A splitting of this sequence induces an isomorphism $B \simeq A \oplus C$. Consequently $\wedge^{k}B^*$ decomposes as a direct sum $\oplus_i (\wedge^{k-i}C^*\otimes \wedge^kA^*)$. The projections from $\wedge^kW^*$ to each of these factors depend on the particular splitting, however, the projection to $\wedge^{k-l}C^* \otimes \wedge^l A^*$ does not. We define the \textbf{residue map} to be this projection:
\begin{equation*}
\Res : \wedge^kB^* \rightarrow \wedge^{k-l}C^* \otimes \wedge^l A^*.
\end{equation*}

Let $0 \rightarrow \mathcal{A} \rightarrow \mathcal{B} \rightarrow \mathcal{C} \rightarrow 0$ be a short exact sequence of Lie algebroids with $\text{rk}(\mathcal{A}) = l$. Applying the residue map fibre-wise gives a map $\Res:\Omega^k(\mathcal{B}) \rightarrow \Omega^{k-l}(\mathcal{C};\det(\mathcal{A}^*))$. To endow $\Omega^{\bullet}(\mathcal{C};\det(\mathcal{A}^*))$ with a differential one needs a flat $\mathcal{C}$-connection on $\det(\mathcal{A}^*)$. We now describe a situation in which such a connection exists and the residue map is a cochain morphism with respect to this differential.
\begin{proposition}\label{prop:res maps}
Let $\mathcal{B}\rightarrow M$ be a Lie algebroid and let $i: \mathcal{A} \hookrightarrow \mathcal{B}$ be an abelian ideal subalgebroid of rank $l$. Then there is a canonical flat $(\mathcal{B}/\mathcal{A})$-connection on $\det(\mathcal{A}^*)$. 

Furthermore assume that for every point $x \in M$, there exists a neighbourhood $U$ of $p$ and closed sections $\beta_1,\ldots,\beta_l \in \Omega^1(\mathcal{B}|_U)$ such that $i^*\beta_1,\ldots,i^*\beta_l$ generate $\Omega^1(\mathcal{A}|_U)$. Then $\Res:\Omega^k(\mathcal{B}) \rightarrow \Omega^{k-l}(\mathcal{B}/\mathcal{A};\det(\mathcal{A}^*))$ is a cochain morphism.
\end{proposition}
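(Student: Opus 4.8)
The plan is to prove the two assertions in turn: the construction of the connection is essentially formal, while the cochain property reduces to a computation on the distinguished neighbourhoods $U$ in which the closedness hypothesis does double duty.

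First I would build the connection. Observe that an abelian ideal subalgebroid automatically has $\rho(\mathcal{A})=0$ (wherever $\mathcal{A}\subsetneq\mathcal{B}$, which is the relevant range): for $a\in\Gamma(\mathcal{A})$, $b\in\Gamma(\mathcal{B})$, $f\in C^{\infty}(M)$, both $[fb,a]$ and $f[b,a]$ lie in $\Gamma(\mathcal{A})$, hence so does the Leibniz term $(\rho(a)f)\,b$, and since $b$ is arbitrary this forces $\rho(a)=0$. Consequently $\mathcal{B}/\mathcal{A}$ inherits an anchor from $\rho$ and is a genuine Lie algebroid, and the adjoint action $\operatorname{ad}_{b}a:=[b,a]$ is $C^{\infty}(M)$-linear in $b$ (again because $\rho(a)=0$) and a derivation over $\rho(b)$ in $a$; so $\operatorname{ad}$ is a $\mathcal{B}$-connection on $\mathcal{A}$, flat by the Jacobi identity. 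Since $\mathcal{A}$ is abelian, $\operatorname{ad}_{a}=0$ for $a\in\Gamma(\mathcal{A})$, so $\operatorname{ad}$ descends to a flat $(\mathcal{B}/\mathcal{A})$-connection on $\mathcal{A}$, and by functoriality of $\wedge^{l}(-)^{*}$ to the claimed canonical flat $(\mathcal{B}/\mathcal{A})$-connection $\nabla$ on $\det(\mathcal{A}^{*})$, given on $\mathcal{A}^{*}$ by $(\nabla_{\bar b}\xi)(a)=\rho(b)\big(\xi(a)\big)-\xi([b,a])$ and extended to $\wedge^{l}$ as a derivation. In particular $\big(\Omega^{\bullet}(\mathcal{B}/\mathcal{A};\det\mathcal{A}^{*}),d_{\nabla}\big)$ is a complex.

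For the cochain property: since $\Res$ is tensorial and $d_{\mathcal{B}},d_{\nabla}$ are local, it is enough to check $\Res\circ d_{\mathcal{B}}=d_{\nabla}\circ\Res$ over each $U$ from the hypothesis. Fix closed $\beta_{1},\dots,\beta_{l}\in\Omega^{1}(\rest{\mathcal{B}}{U})$ with $i^{*}\beta_{1},\dots,i^{*}\beta_{l}$ a frame of $\rest{\mathcal{A}^{*}}{U}$ and choose a vector-bundle splitting of $0\to\mathcal{A}\to\mathcal{B}\to\mathcal{B}/\mathcal{A}\to 0$ over $U$; then $\{\beta_{j}\}$ together with $\pi^{*}$ of a frame of $(\mathcal{B}/\mathcal{A})^{*}$ frames $\rest{\mathcal{B}^{*}}{U}$, every $\omega\in\Omega^{k}(\rest{\mathcal{B}}{U})$ is uniquely $\omega=\sum_{S}\beta_{S}\wedge\pi^{*}\eta_{S}$ with $\eta_{S}\in\Omega^{k-|S|}(\rest{\mathcal{B}/\mathcal{A}}{U})$, and $\Res(\omega)=\eta_{\{1,\dots,l\}}\otimes\mu$ with $\mu:=i^{*}\beta_{1}\wedge\dots\wedge i^{*}\beta_{l}$. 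Now closedness enters twice. First, $d_{\mathcal{B}}\beta_{S}=0$ and $d_{\mathcal{B}}\circ\pi^{*}=\pi^{*}\circ d_{\mathcal{B}/\mathcal{A}}$ (as $\pi$ is a Lie algebroid morphism), so $d_{\mathcal{B}}(\beta_{S}\wedge\pi^{*}\eta_{S})=\pm\,\beta_{S}\wedge\pi^{*}(d_{\mathcal{B}/\mathcal{A}}\eta_{S})$ still has $\beta$-degree $|S|$; hence only $S=\{1,\dots,l\}$ survives under $\Res$ and $\Res(d_{\mathcal{B}}\omega)=\pm\,d_{\mathcal{B}/\mathcal{A}}\big(\eta_{\{1,\dots,l\}}\big)\otimes\mu$. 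Second, Cartan's formula and $\rho(\mathcal{A})=0$ give, for each closed $\beta_{j}$ and $a\in\Gamma(\mathcal{A})$, $(\nabla_{\bar b}\,i^{*}\beta_{j})(a)=\rho(b)(\beta_{j}(a))-\beta_{j}([b,a])=(d_{\mathcal{B}}\beta_{j})(b,a)=0$, so $\mu$ is $\nabla$-parallel and $d_{\nabla}(\Res\omega)=d_{\mathcal{B}/\mathcal{A}}\big(\eta_{\{1,\dots,l\}}\big)\otimes\mu$. The two expressions agree up to the sign built into the normalisation of $\Res$, and a short sign count (equivalently, absorbing a factor $(-1)^{lk}$ into $\Res$ in degree $k$) upgrades this to an equality; as $U$ ranges over a cover of $M$ the identity is global.

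The only genuinely non-routine point, which I expect to be the main obstacle, is recognising the double role of the closedness hypothesis: one needs $d_{\mathcal{B}}\beta_{j}=0$ both to stop $d_{\mathcal{B}}$ from producing new top-residue-degree terms and, simultaneously, to make the local frame $\mu$ of $\det(\mathcal{A}^{*})$ built from the $\beta_{j}$ parallel for the canonical connection $\nabla$ — this is exactly what makes the residue of $d_{\mathcal{B}}\omega$ reproduce the connection-twisted differential rather than the untwisted one. Everything else is linear algebra in the adapted frame, the standard identity $d_{\mathcal{B}}\circ\pi^{*}=\pi^{*}\circ d_{\mathcal{B}/\mathcal{A}}$, and the splitting-independence of $\Res$ recalled above.
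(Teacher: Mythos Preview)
Your proof is correct and follows the same route as the paper: define the $(\mathcal{B}/\mathcal{A})$-connection on $\mathcal{A}$ via the adjoint action $\nabla_{c}a=[\sigma(c),a]$ for any splitting $\sigma$, then verify the cochain identity locally using the closed frame $\beta_{1},\dots,\beta_{l}$ and the formula $\Res(\beta_{1}\wedge\dots\wedge\beta_{l}\wedge\gamma)=\gamma\otimes(i^{*}\beta_{1}\wedge\dots\wedge i^{*}\beta_{l})$. Your write-up is in fact more explicit than the paper's ``straightforward to verify''/``one easily verifies'': you supply the reason $\rho|_{\mathcal{A}}=0$ (needed for $C^{\infty}$-linearity of $\nabla$ in $c$) and articulate the double role of closedness---keeping $d_{\mathcal{B}}$ from raising the $\beta$-degree and making $\mu$ $\nabla$-parallel---which is exactly the content hidden in the paper's last sentence.
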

\begin{proof}
Let $\sigma : \mathcal{B}/\mathcal{A} \rightarrow \mathcal{B}$ be any splitting and define:
\begin{equation*}
\nabla_{c}(a) = [\sigma(c),a], \quad c \in \Gamma(\mathcal{B}/\mathcal{A}), a \in \Gamma(\mathcal{A}). 
\end{equation*}
It is straightforward to verify that this is connection is flat and does not depend on the choice of splitting. If we let $\beta_1,\ldots,\beta_l \in \Omega^1(\mathcal{B})$ be local closed forms with the property that their restriction to $\mathcal{A}$ defines a local frame for $\Omega^1(\mathcal{A})$, then 
\begin{equation*}
\Res(\beta_1 \wedge \ldots \wedge \beta_l \wedge \gamma) = \gamma \otimes (\beta_1 \wedge \ldots \wedge \beta_l),
\end{equation*}
and one easily verifies that $\Res$ is a cochain morphism.
\end{proof}
If $I_{\abs{D}}$ is an elliptic ideal with intersection number $n$ then the restriction of the anchor of the elliptic tangent bundle to $D[n]$ has image inside $TD[n]$, and hence $\rest{\elli}{D[n]}$ is again a Lie algebroid. We will describe this Lie algebroid using the language of Atiyah algebroids, which we recall in Appendix \ref{sec:Atiyah}. 
\begin{lemma}\label{lem:restatiyah}
Let $I_{\abs{D}}$ be an elliptic divisor with intersection number $n$ and let $P$ denote the $(O(2)\times \rr_+)^n\rtimes S_n$-structure group reduction of $ND[n]$ from Proposition \ref{prop:canstrucred}. Then $\rest{\elli}{D[n]} \simeq \At(P)$.
\end{lemma}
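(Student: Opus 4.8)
The plan is to realise both sides as Lie algebroids acting on the normal bundle $ND[n]$, and to identify them by comparing local frames. Recall from Appendix \ref{sec:Atiyah} that for a vector bundle $E \to N$ the Atiyah algebroid $\At(\Fr(E))$ is the algebroid of \emph{derivations} of $E$ --- pairs consisting of a vector field $V$ on $N$ and an $\rr$-linear map $\delta \colon \Gamma(E) \to \Gamma(E)$ with $\delta(fs) = f\,\delta(s) + (Vf)\,s$ --- sitting in $0 \to \End(E) \to \At(\Fr(E)) \to TN \to 0$, and that for a reduction $P \subseteq \Fr(E)$ of the structure group to $G$, the subalgebroid $\At(P)$ consists of those derivations whose flow preserves $P$, with adjoint bundle $\mathrm{ad}(P) = P\times_G \mathfrak{g}$. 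I would first construct a morphism of Lie algebroids $\Phi \colon \rest{\elli}{D[n]} \to \At(P)$. Since the anchor of $\rest{\elli}{D[n]}$ has image in $TD[n]$, a local section is the restriction to $D[n]$ of a vector field $X$ on $M$ preserving $I_{\abs{D}}$; such an $X$ is tangent to $D[n]$, so its flow induces a flow on $ND[n]$, i.e.\ a derivation $\mathcal{L}^\nu_X$ of $ND[n]$ (explicitly $\mathcal{L}^\nu_X \bar t = \overline{[X,t]}$ for any lift $t$ of $\bar t$). A routine check on $1$-jets shows that $X \mapsto \mathcal{L}^\nu_X$ descends to a well-defined vector bundle morphism; it intertwines the anchors (both projecting to $X|_{D[n]}$) and, since $\mathcal{L}_{[X,Y]} = [\mathcal{L}_X,\mathcal{L}_Y]$ descends to the normal quotient, the brackets.

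The substantive step is that $\mathcal{L}^\nu_X$ actually preserves $P$, so that $\Phi$ lands in $\At(P)$ and not merely in $\At(\Fr(ND[n]))$. By Lemma \ref{lem:ellinormform}, near a point $x\in D[n]$ the locus $D$ is a union of $n$ smooth, pairwise transverse branches $D_1,\dots,D_n$ (the closures of the connected components of $D\setminus D(2)$), and the flow of $X$, preserving $I_{\abs{D}}$ and hence $D$ setwise, permutes these branches; as the local factorisation $I_{\abs{D}} = I_{q_1}\cdots I_{q_n}$ into smooth elliptic ideals with transverse loci is essentially unique, the flow permutes the ideals $I_{q_1},\dots,I_{q_n}$ accordingly. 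A diffeomorphism preserving a smooth elliptic ideal preserves the conformal class of its normal Hessian (Remark \ref{rem:metric}), so under the transverse-intersection splitting $N(D[n]) \simeq \bigoplus_i \rest{N(D_i)}{D[n]}$ used in Proposition \ref{prop:canstrucred}, the map $\mathcal{L}^\nu_X$ preserves --- up to the $S_n$-permutation of the summands --- each conformal normal-Hessian class. This is exactly the statement that $\mathcal{L}^\nu_X$ preserves the $(O(2)\times\rr_+)^n\rtimes S_n$-reduction $P$, and we obtain $\Phi$ together with a commuting ladder of short exact sequences over $\mathrm{id}_{D[n]}$ in which the $TD[n]$-terms are identified.

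It remains to see that $\Phi$ is an isomorphism, which can be checked fibrewise, hence in the normal coordinates of Lemma \ref{lem:ellinormform}. There $\rest{\elli}{D[n]}$ is framed by $r_1\partial_{r_1},\partial_{\theta_1},\dots,r_n\partial_{r_n},\partial_{\theta_n},\partial_{w_{2n+1}},\dots,\partial_{w_{2n+l}}$; the fields $r_i\partial_{r_i} = x_i\partial_{x_i}+y_i\partial_{y_i}$ and $\partial_{\theta_i} = x_i\partial_{y_i}-y_i\partial_{x_i}$ vanish along $D[n]$, so $\Phi$ maps them into $\End(ND[n])$, where they become the Euler (scaling) generator and the rotation generator of the $i$-th normal plane --- together a frame for the rank-$2n$ adjoint bundle $\mathrm{ad}(P) = P\times_G(\mathfrak{so}(2)\oplus\rr)^n$ --- while the $\partial_{w_j}$ map to a frame of $TD[n]$. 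Thus $\Phi$ carries a local frame to a local frame, so it is an isomorphism of vector bundles, and being a Lie algebroid morphism, of Lie algebroids. I expect the main obstacle to be the middle paragraph: one must argue that preservation of the \emph{product} ideal $I_{\abs{D}}$ forces the induced normal linearisation to respect both the decomposition into branches (up to $S_n$) and the conformal normal Hessians --- equivalently, that the structure group appearing is precisely $(O(2)\times\rr_+)^n\rtimes S_n$ and nothing larger.
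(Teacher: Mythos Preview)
Your proposal is correct and follows essentially the same route as the paper: both linearise a section of $\rest{\elli}{D[n]}$ to a linear vector field on $ND[n]$ (your $\mathcal{L}^\nu_X$ is exactly the paper's $\tilde{X}$) and then argue that preservation of $I_{\abs{D}}$ forces the linearisation to respect the branch decomposition up to $S_n$ and each conformal normal Hessian, landing in $\At(P)$. The only organisational difference is that the paper passes through an auxiliary elliptic ideal $I_{\abs{D}}^{ND[n]}$ on the normal bundle and identifies $\Gamma(\rest{\elli}{D[n]})$ with its linear sections before matching that description against $\Gamma(\At(P))$ via Lemma~\ref{lem:confmetautos}, whereas you construct the map directly and verify bijectivity by comparing local frames; both are legitimate and the substantive content --- that the product ideal factorises essentially uniquely and preservation of a smooth elliptic factor amounts to preservation of its conformal Hessian --- is the same.
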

\begin{proof}
Let $I_1,\ldots,I_n$ be local normal crossing elliptic divisors and let $f_1,\ldots,f_n$ by representatives of these ideals. Let $Q_{f_i} \in C^{\infty}(ND_i)$ denote the quadratic approximations of these functions, and define the ideal $I_{\abs{D}}^{ND[n]} = \inp{Q_{f_1}\cdots Q_{f_n}}$. Although the representatives $f_i$ are not unique nor global this ideal is and thus defines an elliptic divisor on $ND[n]$. Let $\elli^{ND[n]}$ denote the corresponding elliptic tangent bundle. And let $\Gamma(\elli^{ND[n]})_{\text{lin}}$ denote the sections which are send to linear vector fields by the anchor. Remark that $\Gamma(\elli^{ND[n]})_{\text{lin}}$ is only a $C^{\infty}(D[n])$-module, not a $C^{\infty}(ND[n])$-module.

Given $X \in \Gamma(\rest{\elli}{D[n]})$ one can show that there exists a unique linear vector field $\tilde{X} \in \Gamma(\elli^{ND[n]})_{\text{lin}}$ with $\rest{\tilde{X}}{D[n]}=X$. Existence of such a vector field is obtained by linearising any extension $X'\in \Gamma(\rest{\elli}{\mathcal{U}})$ of $X$ and observing that because $X'$ preserves the ideal $I_{\abs{D}}$ its linearisation $\tilde{X}$ preserves the ideal $I_{\abs{D}}^{ND[n]}$.
We thus obtain a map
\begin{equation*}
\phi\colon \Gamma(\rest{\elli}{D[n]}) \rightarrow \Gamma(\elli^{ND[n]})_{\text{lin}}, \qquad X \mapsto \tilde{X}.
\end{equation*}
This map is a bijection and bracket preserving. To finish the argument we are left to show that sections of the right-hand side coincide with sections of $\At(P)$. Let $F$ be a representative of $I_{\abs{D}}^{ND[n]}$, then
\begin{equation*}
\Gamma(\elli^{ND[n]})_{\text{lin}} = \set{X \in \mathfrak{X}^1(ND[n])_{\text{lin}} : \mathcal{L}_X(F) = \lambda F \text{ for some } \lambda \in C^{\infty}(D[n])}.
\end{equation*}
Note that a priori for an element of $\Gamma(\elli^{ND[n]})$ we only have $\mathcal{L}_X(F) = \lambda F$ with $\lambda \in C^{\infty}(ND[n])$, however because $X$ is linear we must have $\lambda \in C^{\infty}(D[n])$.

As $F$ is locally of the form $Q_{f_1}\cdots Q_{f_n}$ and all these functions are functionally indivisible, we must have that for all $i$ there exists precisely one $j$ such that $\mathcal{L}_X(Q_{f_i}) = \lambda^i_j Q_{f_j}$ for some function $\lambda^i_j \in C^{\infty}(D[n])$. 

On the other side, using Lemma \ref{lem:confmetautos} one can show that
\begin{equation*}
\At(P) = \set{X \in \mathfrak{X}(ND[n])_{\text{lin}} : \forall ~ i ~ \exists ! ~ j ~ \text{s.t. } \mathcal{L}_X(\Hess^{\nu}(f_i)) = \lambda^i_j \Hess^{\nu}(f_j), \text{ for some } \lambda^i_j \in C^{\infty}(D[n])}.
\end{equation*}
Using the fact that for a linear vector field $X$ and definite Morse--Bott functions $f,g$ we have $\mathcal{L}_X(\Hess^{\nu}f) = g$ if and only if $\mathcal{L}_X(Q_f) = Q_g$ we see that indeed $\Gamma(\elli^{ND[n]})_{\text{lin}}= \Gamma(\At(P))$, which finishes the proof.
\end{proof}

\subsection{Residues for smooth elliptic divisors}
In this section we describe the radial residue map for a smooth elliptic divisor. The radial residue map was already used in \cite{CG17} to compute the Lie algebroid cohomology of the elliptic tangent bundle of a smooth elliptic divisor. We recall their construction and add some details which are especially important in the general case.
\begin{lemma}\label{lem:isotropygood}
Let $I_{\abs{D}}$ be a smooth elliptic divisor on a manifold $M$, and let $\rho : \elli \rightarrow TM$ be the corresponding elliptic tangent bundle. There exists a unique nowhere vanishing section $\mathcal{E}_D$ in $\rest{\ker \rho}{D}$ with the property that every extension is an Euler like vector field\footnote{That is, a vector field which vanishes along $D$ and has linearisation the Euler vector field.}.
\end{lemma}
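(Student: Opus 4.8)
The plan is to work in the local normal form of Lemma~\ref{lem:ellinormform}. Since $I_{\abs{D}}$ is a \emph{smooth} elliptic divisor, the intersection number is $1$, so near any point of $D$ there are coordinates $(x,y,w_3,\ldots,w_m)$ in which $I_{\abs{D}} = \inp{x^2+y^2}$ and, as recalled after the definition of the elliptic tangent bundle,
\begin{equation*}
\Gamma(\elli) = \inp{r\partial_r,\ \partial_\theta,\ \partial_{w_3},\ldots,\partial_{w_m}},
\end{equation*}
with $r\partial_r = x\partial_x + y\partial_y$ and $\partial_\theta = x\partial_y - y\partial_x$. The anchor $\rho$ sends each of these to the corresponding vector field on $M$, and $D = \{x=y=0\}$. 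First I would identify $\rest{\ker\rho}{D}$: along $D$ the vector fields $\partial_{w_j}$ and $r\partial_r$ are still in the kernel only if their image vanishes there — $\rho(r\partial_r) = x\partial_x+y\partial_y$ vanishes on $D$, $\rho(\partial_\theta) = x\partial_y - y\partial_x$ vanishes on $D$, while $\rho(\partial_{w_j}) = \partial_{w_j}$ does not. Hence $\rest{\ker\rho}{D}$ is the rank-$2$ bundle spanned by $r\partial_r$ and $\partial_\theta$. The candidate for $\mathcal{E}_D$ is (the restriction to $D$ of) $r\partial_r = x\partial_x + y\partial_y$, whose image under $\rho$ is precisely the radial Euler vector field of the normal directions; its linearisation along $D$ is the Euler vector field of $ND$.

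Next I would prove the three assertions — existence, the Euler-like property of extensions, and uniqueness — keeping track of coordinate independence. For existence and the Euler-like property: given the local section $\mathcal{E}_D := \rest{(r\partial_r)}{D}$, any extension $E \in \Gamma(\elli)$ near $D$ has $\rho(E) = \rho(r\partial_r) + (\text{terms vanishing on }D\text{ coming from the other generators with coefficients that vanish on }D)$, because to lie in $\ker\rho$ along $D$ the $\partial_{w_j}$-coefficients must vanish on $D$, so $\rho(E) = x\partial_x+y\partial_y + (\text{higher order in }x,y)$. Thus $\rho(E)$ vanishes along $D$ and has linearisation the Euler vector field, i.e.\ it is Euler-like in the sense of the footnote. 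For uniqueness: if $\mathcal{E}_D'$ is another such section, then $\mathcal{E}_D' - \mathcal{E}_D \in \rest{\ker\rho}{D}$, and I must show this difference is zero. Writing $\mathcal{E}_D' - \mathcal{E}_D = a\, r\partial_r + b\, \partial_\theta$ along $D$ with $a,b \in C^\infty(D)$, an extension of $\mathcal{E}_D'$ has anchor image $\rho(\mathcal{E}_D) + a(x\partial_x+y\partial_y) + b(x\partial_y - y\partial_x) + (\text{h.o.t.})$; requiring its linearisation along $D$ to again be the Euler vector field forces the linear part $(1+a)(x\partial_x+y\partial_y) + b(x\partial_y-y\partial_x)$ to equal $x\partial_x+y\partial_y$, hence $a=b=0$ pointwise on $D$. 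This pins down $\mathcal{E}_D$ locally and uniquely, so the local sections agree on overlaps and glue to a global nowhere-vanishing section.

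The only genuinely delicate point is the coordinate-independence/gluing, i.e.\ checking that the locally defined $\rest{(r\partial_r)}{D}$ really is invariant under the transition maps of the normal form of Lemma~\ref{lem:ellinormform}. This is exactly what the uniqueness argument buys: in any two normal-form charts the respective sections both have the characterising property, so they must coincide on the overlap; thus no explicit computation of transition functions is needed and the global section exists. I expect this gluing step — phrased as ``uniqueness is local, hence the global object exists'' — to be the crux, while the existence and Euler-like verifications are a short computation in the model. One can also note the resulting $\mathcal{E}_D$ is, up to the canonical conformal class on $ND$ of Remark~\ref{rem:metric}, the radial vector field, which is why it is natural to call it the \emph{radial} generator.
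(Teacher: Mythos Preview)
Your proof is correct and follows essentially the same approach as the paper: work in the Morse--Bott normal form, exhibit $\rest{(r\partial_r)}{D}$ as the candidate section, and verify the Euler-like property and uniqueness in coordinates. The paper is terser --- it begins with the Euler vector field of an arbitrary tubular neighbourhood (giving a global candidate at once) and leaves uniqueness as ``readily verified'' --- whereas you build the section locally and glue via uniqueness, and you spell out the uniqueness computation explicitly; both routes are equivalent. One minor imprecision: in your Euler-like step, the $g_j\partial_{w_j}$ terms of an extension can be \emph{linear} in $(x,y)$ rather than higher order, but since they point in the $TD$-direction they do not affect the normal linearisation, so the conclusion stands.
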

\begin{proof}
Let $\mathcal{U}$ be any tubular neighbourhood of $D$, and let $\mathcal{E}$ be the corresponding Euler vector field. Let $f \in I_{\abs{D}}$ be a local representative of the ideal, and pick local Morse--Bott coordinates in which $f = r^2$. In these coordinates $\mathcal{E}$ is given by $r\partial_r$. Therefore, $\mathcal{E}$ defines a section of the elliptic tangent bundle and its restriction to $D$ defines a section $\mathcal{E}_D$ of $\rest{\ker \rho}{D}$. It is readily verified that $\mathcal{E}_D$ is the unique vector field with the given property.
\end{proof}

Because $L_r = \rr \cdot \mathcal{E}_D$ is an ideal of $\rest{\elli}{D}$ we can consider the quotient Lie algebroid, which can be described as follows:

\begin{lemma}
Let $I_{\abs{D}}$ be a smooth elliptic divisor with non-negative generator $f \in I_{\abs{D}}$. Let $Q = O(2)ND$ denote the induced structure group reduction from Corollary \ref{prop:strucredfirst}. Then
\begin{align*}
\rest{\elli}{D}/L_r &\rightarrow \At(Q)\\
[X] &\mapsto X - \frac{\mathcal{L}_X(f)}{f}\mathcal{E}_D
\end{align*}
is an isomorphism of Lie algebroids.
\end{lemma}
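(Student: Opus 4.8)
The plan is to verify the three things that make the stated map an isomorphism of Lie algebroids: it is well-defined (independent of the representative $X$ of the class $[X]$), it lands in $\At(Q)$, and it is a bracket-preserving bijection.

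First I would check well-definedness. Two representatives of $[X]$ differ by a multiple $g\mathcal{E}_D$ with $g \in C^\infty(D)$; since $\mathcal{L}_{\mathcal{E}_D}(f) = 2f$ in the Morse--Bott coordinates $f = r^2$ where $\mathcal{E}_D = r\partial_r$ (up to the choice of non-negative generator, whose scaling does not affect the quotient $\mathcal{L}_X(f)/f$), one computes that the correction term $\frac{\mathcal{L}_X(f)}{f}\mathcal{E}_D$ changes by exactly $g\mathcal{E}_D$, cancelling the ambiguity. One must also note that $\mathcal{L}_X(f)/f$ is a smooth function on $D$ — this is precisely the content of the residue being well-defined, or concretely, since $X$ preserves $I_{\abs{D}} = \inp{f}$ one has $\mathcal{L}_X(f) = \lambda f$ with $\lambda$ smooth, so $\mathcal{L}_X(f)/f = \lambda$ restricts to a smooth function on $D$. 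Thus the map is well-defined.

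Next I would show the image lies in $\At(Q)$, using the description of $\At(Q)$ analogous to the one in the proof of Lemma~\ref{lem:restatiyah}: $\At(Q)$ consists of linear vector fields on $ND$ preserving the conformal class of $\Hess^\nu(f)$, equivalently those $Y$ with $\mathcal{L}_Y(\Hess^\nu(f)) = \mu\,\Hess^\nu(f)$ for $\mu \in C^\infty(D)$. Given $[X]$, its image $Y = X - \frac{\mathcal{L}_X(f)}{f}\mathcal{E}_D$ satisfies $\mathcal{L}_Y(f) = \mathcal{L}_X(f) - \frac{\mathcal{L}_X(f)}{f}\mathcal{L}_{\mathcal{E}_D}(f) = \mathcal{L}_X(f) - \frac{\mathcal{L}_X(f)}{f}\cdot 2f = -\mathcal{L}_X(f)$; more to the point, passing to the linearisation (as in Lemma~\ref{lem:restatiyah}, every section of $\rest{\elli}{D}$ has a unique linear extension in $\elli^{ND}$) and using that for linear vector fields and definite Morse--Bott functions $\mathcal{L}_Y(Q_f) = Q_g \iff \mathcal{L}_Y(\Hess^\nu f) = g$, one reads off that $Y$ preserves the conformal Hessian class, hence $Y \in \At(Q)$. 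The subtraction of the $\mathcal{E}_D$-component is exactly what kills the $\rr_+$-scaling direction, so $O(2)\times\rr_+$ drops to $O(2)$.

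Finally, bijectivity and the bracket. Injectivity: if $Y = 0$ then $X = \frac{\mathcal{L}_X(f)}{f}\mathcal{E}_D \in L_r$, so $[X] = 0$. Surjectivity: given $Y \in \Gamma(\At(Q))$, lift it to a linear vector field $\tilde Y$ on $ND$ preserving the Hessian class, restrict to $D$ to get a section $X$ of $\rest{\elli}{D}$ (that $X$ preserves $I_{\abs{D}}$ follows since $\tilde Y$ preserves $Q_f$ up to a function on $D$), and check the class $[X]$ maps back to $Y$ — the correction term is zero precisely because we chose the representative with no $\mathcal{E}_D$-component. That the map is a Lie algebroid morphism amounts to checking it commutes with anchors (clear, since the $\mathcal{E}_D$-correction is anchored to a vector field tangent to the fibres of $ND \to D$, which vanishes on $D$) and respects brackets; the latter is the routine computation that the "projection away from the Euler direction" is bracket-compatible, which follows because $L_r$ is an ideal and $\frac{\mathcal{L}_X(f)}{f}$ transforms correctly under the bracket — this is essentially Proposition~\ref{prop:res maps} applied to the abelian ideal $L_r \subset \rest{\elli}{D}$. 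I expect the main obstacle to be the bracket-preservation: one has to track how the coefficient function $\mathcal{L}_X(f)/f$ behaves under $[\,\cdot,\cdot\,]$ and confirm the cross-terms cancel, which is where a careful but unilluminating computation (or an appeal to the residue formalism of Section~\ref{sec:residues}) is needed.
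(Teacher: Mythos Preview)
The paper does not supply a proof for this lemma, so there is no argument to compare against directly. Your overall plan --- verify well-definedness, check the image lands in $\At(Q)$, then argue bijectivity and bracket-preservation --- is the right one. However, your execution contains a slip that, had you followed it through, would have flagged a problem with the formula as written.

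You correctly record that $\mathcal{L}_{\mathcal{E}_D}(f) = 2f$ in the Morse--Bott model $f = r^2$, $\mathcal{E}_D = r\partial_r$. But then, replacing $X$ by $X + g\mathcal{E}_D$, the coefficient $\frac{\mathcal{L}_X(f)}{f}$ shifts by $2g$, not by $g$; so the correction term shifts by $2g\,\mathcal{E}_D$ and only half of the ambiguity $g\,\mathcal{E}_D$ is cancelled. Likewise, your own computation gives $\mathcal{L}_Y(f) = -\mathcal{L}_X(f) \neq 0$, which you then set aside. That nonzero answer is exactly the obstruction: $\At(Q)$ for $Q = O(2)ND$ consists of linear vector fields with $\mathcal{L}_Y(\Hess^\nu f) = 0$ (Lemma~\ref{lem:metricautos}), \emph{not} those preserving only the conformal class --- the latter is $\At((O(2)\times\rr_+)ND)$, cf.\ Lemma~\ref{lem:confmetautos}. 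You conflate the two when you say ``$Y$ preserves the conformal Hessian class, hence $Y \in \At(Q)$''.

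Both issues disappear if one inserts a factor $\tfrac{1}{2}$ and uses the map $[X] \mapsto X - \tfrac{1}{2}\frac{\mathcal{L}_X(f)}{f}\,\mathcal{E}_D$: then the representative-dependence cancels exactly, and $\mathcal{L}_Y(f) = 0$ on the nose, so $Y \in \At(O(2)ND)$. With that correction the rest of your outline (injectivity, surjectivity via lifting, bracket-preservation because $L_r$ is an abelian ideal) goes through. The lesson is that when a routine verification produces an unexpected nonzero remainder, as your $\mathcal{L}_Y(f) = -\mathcal{L}_X(f)$ did, that is the signal to stop and locate the discrepancy rather than to pass over it.
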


Because $L_r$ is an abelian ideal of $\rest{\elli}{D}$ we may apply Proposition \ref{prop:res maps} to obtain a residue map $\Omega^\bullet(\rest{\elli}{D}) \rightarrow \Omega^{\bullet-1}(\At(O(2)ND);L_r^*)$. By Lemma \ref{lem:ellinormform} $\Omega^1(\elli)$ is locally generated by closed forms and therefore Proposition \ref{prop:res maps} implies that the residue map is a cochain morphism. After pre-composing with the restriction to $D$ to obtain the the \textbf{radial residue map}:
\begin{equation*}
\Res_r : \Omega^\bullet(\mathcal{A}_{\abs{D}}) \rightarrow \Omega^{\bullet-1}(\At(O(2)ND) ; L_r^*) = \Omega^{\bullet-1}(\At(O(2)ND)),
\end{equation*}
where we used that $L_r^*$ is trivial as an $\At(O(2)ND)$-representation.
Note that this residue map depends on the choice of representative $f \in I_{\abs{D}}$, but any two different choices result into isomorphic Atiyah algebroids. In the local coordinates $(x_1,y_1,w_3,\ldots,w_n)$ of Lemma \ref{lem:ellinormform} the radial residue map is given by:
\begin{equation*}
\Res_r(\alpha) = (\iota_{r_1 \partial_{r_1}} \alpha)|_{D}, \qquad \alpha \in \Omega^\bullet(\mathcal{A}_{|D|}).
\end{equation*}
\subsection{Residues for self-crossing elliptic divisors}
We will now describe residue maps for the self-crossing elliptic tangent bundle. 
As before, the Euler vector fields generate an abelian ideal:

\begin{lemma}\label{lem:atiso2}
Let $I_{\abs{D}}$ be an elliptic divisor with intersection number $n$. Then there exists a canonical abelian ideal subalgebroid $L_r$ of $\rest{\elli}{D[n]}$. In local coordinates of Lemma \ref{lem:ellinormform} we have
\begin{equation*}
\Gamma(L_r) \simeq \inp{\rest{r_1\partial_{r_1}}{D[n]},\ldots, \rest{r_n\partial_{r_n}}{D[n]}}.
\end{equation*}
\end{lemma}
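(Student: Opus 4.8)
The plan is to mimic the construction of $L_r$ in the smooth case (Lemma \ref{lem:isotropygood}), but carried out simultaneously along each of the $n$ local branches, and then to check that the resulting subbundle is independent of all the choices involved. First I would fix a point $x \in D[n]$ and local normal crossing elliptic divisors $I_1,\ldots,I_n$ near $x$ with non-negative representatives $f_1,\ldots,f_n$, together with Morse--Bott coordinates $(x_1,y_1,\ldots,x_n,y_n,w_\bullet)$ as in Lemma \ref{lem:ellinormform}, so that $f_j = x_j^2 + y_j^2 = r_j^2$ up to a positive function. In these coordinates the elliptic tangent bundle is generated by $r_1\partial_{r_1},\partial_{\theta_1},\ldots,r_n\partial_{r_n},\partial_{\theta_n},\partial_{w_\bullet}$, and the vector fields $r_j\partial_{r_j}$ span an $n$-dimensional subspace of $\ker\rho$ at each point of $D[n]$. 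I would define $L_r$ locally as the span of the restrictions $\rest{r_j\partial_{r_j}}{D[n]}$, $j=1,\ldots,n$.

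The main work is to show this subbundle is canonical, i.e. independent of the choice of local normal crossing divisors, of the representatives $f_j$, and of the Morse--Bott coordinates, and then that the local pieces glue. For this I would characterise $L_r$ intrinsically: at $x\in D[n]$, a vector $v \in (\rest{\elli}{D[n]})_x$ lies in $(L_r)_x$ if and only if it lies in $\ker\rho$ and, for each $j$, an extension of $v$ to a section of $\elli$ near $x$ has the appropriate "Euler-like in the $j$-th normal direction" behaviour — concretely, $v$ is a $C^\infty(D[n])_x$-linear combination of the canonical Euler sections $\mathcal{E}_{D_j}$ attached (via Lemma \ref{lem:isotropygood} applied to the smooth elliptic divisor $I_j$) to each branch passing through $x$. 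Since each $\mathcal{E}_{D_j}$ is itself canonical by Lemma \ref{lem:isotropygood}, and since the transverse intersection gives the canonical splitting $N_x(D[n]) \simeq \bigoplus_j \rest{N_x(D_j)}{D[n]}$ used in Proposition \ref{prop:canstrucred}, the span of the $\mathcal{E}_{D_j}$ is independent of the choices; this both establishes canonicity and automatically forces the local definitions to agree on overlaps, yielding a global subbundle $L_r \subset \rest{\elli}{D[n]}$. Its rank is $n$ because $\ker\rho$ has rank $2n$ over $D[n]$ (spanned by the $r_j\partial_{r_j}$ and $\partial_{\theta_j}$) and the $\mathcal{E}_{D_j}$ are pointwise linearly independent.

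It then remains to verify that $L_r$ is an abelian ideal subalgebroid of $\rest{\elli}{D[n]}$. Abelianness is the bracket computation $[r_i\partial_{r_i}, r_j\partial_{r_j}] = 0$, which is immediate in the coordinates above (the $r_j\partial_{r_j}$ are commuting linear vector fields, and restriction to $D[n]$ is a Lie algebroid morphism). For the ideal property I would compute $[r_i\partial_{r_i}, r_j\partial_{r_j}] = 0$, $[\partial_{\theta_i}, r_j\partial_{r_j}] = 0$, and $[\partial_{w_k}, r_j\partial_{r_j}] = 0$ in the standard coordinates, showing that bracketing any generator of $\Gamma(\rest{\elli}{D[n]})$ with a generator of $\Gamma(L_r)$ lands back in $\Gamma(L_r)$ (in fact in $0$); combined with the Leibniz rule this gives $[\Gamma(\rest{\elli}{D[n]}), \Gamma(L_r)] \subseteq \Gamma(L_r)$. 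I expect the only genuinely delicate point to be the canonicity/gluing argument — ensuring that the $S_n$-ambiguity in the ordering of branches and the non-canonical choice of the $f_j$ do not affect the resulting subbundle — whereas the bracket identities are routine once the normal form of Lemma \ref{lem:ellinormform} is in hand.
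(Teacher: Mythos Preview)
Your proposal is correct and follows essentially the same route as the paper: define $L_r$ locally as the span of the canonical Euler sections $\mathcal{E}_{D_j}$ coming from Lemma~\ref{lem:isotropygood} applied to each local branch, use the transverse splitting $\rest{\ker\rho}{D[n]} \simeq \bigoplus_j \rest{\ker\rho_j}{D[n]}$ to see that this span is independent of the choice and ordering of local normal crossing divisors, and conclude that the local pieces glue. The paper's proof is in fact terser than yours---it does not spell out the bracket computations verifying that $L_r$ is an abelian ideal---so your explicit check of $[r_i\partial_{r_i},r_j\partial_{r_j}]=[\partial_{\theta_i},r_j\partial_{r_j}]=[\partial_{w_k},r_j\partial_{r_j}]=0$ is a welcome addition rather than a deviation.
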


\begin{proof}
Let $x \in D[n]$, and let $I_{\abs{D_1}},\ldots,I_{\abs{D_n}}$ be local normal crossing divisor on a neighbourhood $U$ of $x$ and let $\rho_i : \mathcal{A}_{\abs{D_i}} \rightarrow TU$ be the corresponding elliptic tangent bundles. By transverality of the loci $D_i$ we obtain the following splitting:
\begin{equation*}
\rest{\ker \rho}{D[n]\cap U} \simeq \rest{\ker \rho_1}{D[n]\cap U} \oplus \cdots \oplus \rest{\ker \rho_n}{D[n]\cap U}.
\end{equation*}
By Lemma \ref{lem:isotropygood} there exist canonical vector subbundles $L_{r_i}$ of $\rest{\ker \rho_i}{D(n)}$ with generators $\mathcal{E}_{D_i}$. Define $\rest{L_r}{U}$ as the direct sum of these vector subbundles. Although each of the summands $L_{r_i}$ is not globally defined and neither is their order, $L_r$ does define a global vector bundle. Finally, in the local coordinates of Lemma \ref{lem:ellinormform} we have $\mathcal{E}_{D_i} = r_i \partial_{r_i}$ as desired.
\end{proof}
Because $L_r$ is an ideal we can consider the quotient Lie algebroid which can be described as follows:
\begin{lemma}
There is an $O(2)^n \rtimes S_n$-principal bundle $Q$ such that $\At(Q) = \rest{\elli}{D[n]}/L_r$. 
\end{lemma}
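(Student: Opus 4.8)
The plan is to combine Lemma~\ref{lem:restatiyah} with the quotient-by-$L_r$ construction, mirroring the smooth case. By Lemma~\ref{lem:restatiyah} we have $\rest{\elli}{D[n]} \simeq \At(P)$, where $P$ is the $(O(2)\times \rr_+)^n\rtimes S_n$-reduction of $ND[n]$ from Proposition~\ref{prop:canstrucred}. Under this isomorphism I would identify $L_r$ with the subalgebroid of $\At(P)$ generated by the infinitesimal dilations in each of the $n$ normal-bundle factors: in the local frame of Lemma~\ref{lem:atiso2}, $r_i\partial_{r_i}$ is precisely the vertical vector field generating the $i$-th $\rr_+$-action on $ND[n]$. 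The first step is therefore to make this identification precise and to check that the ideal $L_r \subset \At(P)$ is exactly $\ker(\At(P)\to\At(P'))$, where $P' = P/(\rr_+)^n$ is the induced $O(2)^n\rtimes S_n$-reduction of $ND[n]$ obtained from Corollary~\ref{prop:strucredfirst}.

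The key structural fact I would invoke is that for a principal $G$-bundle $P$ with a normal closed subgroup $H\trianglelefteq G$ (here $G = (O(2)\times\rr_+)^n\rtimes S_n$ and $H = (\rr_+)^n$, which is normal because the $S_n$-action permutes the $\rr_+$ factors and $O(2)$ commutes with $\rr_+$), the quotient $P/H$ is a principal $(G/H)$-bundle and there is a short exact sequence of Atiyah algebroids
\begin{equation*}
0 \longrightarrow (P\times_G \mathfrak{h}) \longrightarrow \At(P) \longrightarrow \At(P/H) \longrightarrow 0,
\end{equation*}
where $P\times_G\mathfrak{h}$ is the adjoint bundle of $\mathfrak{h} = \mathrm{Lie}(H)$. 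Since $H = (\rr_+)^n$ is abelian and the $G$-action on $\mathfrak{h}\cong\rr^n$ is by the permutation representation of $S_n$ (the $O(2)\times\rr_+$ factors act trivially on $\mathfrak{h}$), this adjoint bundle is an abelian ideal, and it is canonically identified with $L_r$ by the frame computation above. The middle step is thus to verify that the general Atiyah-sequence statement applies in this setting and that the resulting abelian ideal matches $L_r$ on the nose, not merely abstractly; this is where I would be most careful, since one must check that the $S_n$-twisting in $G$ is compatible with the $S_n$-twisting appearing in the $O(2)^n\rtimes S_n$-reduction $Q := P' = P/(\rr_+)^n$.

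Granting this, the isomorphism $\At(P)/(P\times_G\mathfrak{h}) \simeq \At(P/H)$ from the exact sequence, composed with $\rest{\elli}{D[n]}\simeq \At(P)$ and the identification of the kernel with $L_r$, yields $\rest{\elli}{D[n]}/L_r \simeq \At(Q)$ with $Q = P/(\rr_+)^n$ the $O(2)^n\rtimes S_n$-bundle of Corollary~\ref{prop:strucredfirst}, as claimed. The explicit quotient map can be written, as in the smooth case, by $[X]\mapsto X - \sum_{i=1}^n \frac{\mathcal{L}_X(f_i)}{f_i}\mathcal{E}_{D_i}$ locally, the point being that this expression is independent of the choices of local representatives $f_i$ and of the ordering of the divisors, exactly because $L_r$ and $Q$ are globally well-defined. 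The main obstacle I anticipate is not the algebroid bookkeeping but confirming that $(\rr_+)^n$ sits inside $(O(2)\times\rr_+)^n\rtimes S_n$ as a genuine normal subgroup with abelian quotient-complement structure compatible with the $S_n$-action, so that the Atiyah short exact sequence is available; once that is in hand the rest is a matter of matching local frames.
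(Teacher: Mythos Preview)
Your proposal is correct and follows essentially the same route as the paper: identify $\rest{\elli}{D[n]}$ with $\At(P)$ via Lemma~\ref{lem:restatiyah}, recognise $L_r$ as the vertical subbundle corresponding to the normal subgroup $(\rr_+)^n \trianglelefteq (O(2)\times\rr_+)^n\rtimes S_n$, and pass to the quotient $Q = P/(\rr_+)^n$. The paper phrases this more tersely by saying that $L_r$ ``generates an $(\rr_+)^n$-action on $P$'' and writing the action explicitly on frames, whereas you package the same step via the Atiyah short exact sequence for a normal subgroup; your concern about normality of $(\rr_+)^n$ is easily resolved exactly as you indicate.
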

\begin{proof}
Let $P = (O(2)\times \rr_+)^n \rtimes S_n$ be structure group reduction from Proposition \ref{prop:canstrucred}. As locally the sections of $L_r$ are generated by the Euler vector fields of the normal bundles to the local loci $D_i$ we have that $L_r$ generates an $(\rr_+)^n$-action on $P$. If $x \in M$ and $(u_1,\ldots,u_n) \in P_x$ is a frame with $u_i$ a frame of $N_xD_i$, then this action is given by $(\lambda_1,\ldots,\lambda_n)\cdot (u_1,\ldots,u_n) = (\lambda_1u_1,\ldots,\lambda_nu_n)$. We can consider the quotient bundle $Q := P/(\rr_+)^n$, which is an $O(2)^n \rtimes S_n$-principal bundle. Consequently $\rest{\elli}{D[n]}/L_r = \At(Q)$ which finishes the proof.
\end{proof}

As in the smooth case we apply Proposition \ref{prop:res maps} to obtain a residue map, which we compose with the restriction to $D[n]$ to obtain the \textbf{total radial residue map}:
\begin{equation*}
\Res_r^n : \Omega^\bullet(\elli) \rightarrow \Omega^{\bullet-n}(\At(Q);\det(L_{r}^*)),
\end{equation*}
In the local coordinates of Lemma \ref{lem:ellinormform} we have that the total radial residue map is given by
\begin{equation*}
\Res_r^n(\alpha) = (\iota_{r_1\partial_{r_1}\wedge \cdots \wedge r_n\partial_{r_n}}\alpha)|_{D[n]}.
\end{equation*}

When the elliptic divisor is a global normal crossing divisor the situation simplifies significantly:
\begin{lemma}\label{lem:gncresmap}
Let $I_{\abs{D}} = I_{\abs{D_1}}\cdot \ldots \cdot I_{\abs{D_n}}$ be a global normal crossing elliptic divisor. Let $Q = O(2)^nND[n]$ denote the structure group reduction from Corollary \ref{prop:strucredfirst}. Then $\rest{\elli}{D[n]}/L_r \simeq \At(Q)$. 
\end{lemma}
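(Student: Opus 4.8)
The plan is to reduce this statement to the general situation already handled in the previous two lemmas (the abstract quotient $\rest{\elli}{D[n]}/L_r = \At(Q)$ with $Q = P/(\rr_+)^n$), and to identify the resulting $O(2)^n\rtimes S_n$-bundle with the global normal-crossing bundle $O(2)^nND[n]$ — indeed, because the divisor globally decomposes as $I_{\abs{D_1}}\cdot\ldots\cdot I_{\abs{D_n}}$, the $S_n$-ambiguity disappears and one gets a genuine $O(2)^n$-bundle. So the first step is to invoke Corollary \ref{prop:strucredfirst}: a choice of non-negative representatives $f_1,\ldots,f_n$ of the $I_{\abs{D_i}}$ yields the structure group reduction of $ND[n]$ to $O(2)^n$, namely the bundle $Q = O(2)^nND[n]$ of frames $(u_1,\ldots,u_n)$ with $u_i$ a frame of $\rest{ND_i}{D[n]}$ orthonormal for $\Hess^\nu(f_i)$.

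Next I would set up the isomorphism explicitly, mimicking the smooth-case lemma. From Lemma \ref{lem:atiso2} we have $L_r = L_{r_1}\oplus\cdots\oplus L_{r_n}$ with global generators $\mathcal{E}_{D_i} = r_i\partial_{r_i}$ (global now, since the $D_i$ and their representatives are global), and the quotient map should send
\begin{equation*}
[X] \;\longmapsto\; X - \sum_{i=1}^n \frac{\mathcal{L}_X(f_i)}{f_i}\,\mathcal{E}_{D_i}.
\end{equation*}
One checks this is well-defined on the quotient by $L_r$ (modifying $X$ by $\mathcal{E}_{D_j}$ changes $\mathcal{L}_X(f_i)/f_i$ only in the $i=j$ term, by $1$, cancelling the added $\mathcal{E}_{D_j}$), that it is bracket-preserving, and that it lands in $\At(Q)$ — the latter because each term $X - (\mathcal{L}_X(f_i)/f_i)\mathcal{E}_{D_i}$ preserves the conformal class of $\Hess^\nu(f_i)$ up to scaling, hence by Lemma \ref{lem:confmetautos} defines an infinitesimal automorphism of the $O(2)$-reduction in the $i$-th normal direction, using the characterization of $\At(P)$ from the proof of Lemma \ref{lem:restatiyah}. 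Composing with the projection $Q = P/(\rr_+)^n$ from the preceding lemma shows the two descriptions of the target agree, so the map is the claimed isomorphism.

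The main obstacle I expect is purely bookkeeping: keeping the $n$ normal directions independent and confirming that in the global normal-crossing case every local choice made in Lemma \ref{lem:atiso2} and its follow-up (the ordering of the $D_i$, the local representatives) is now pinned down globally, so that $P/(\rr_+)^n$ is literally $O(2)^nND[n]$ and not merely locally isomorphic to it. Concretely the one computation worth doing carefully is that $X \mapsto \mathcal{L}_X(f_i)/f_i$ descends to $D[n]$ as a smooth function (this is where one uses that the relevant vector fields are linearisable along $D[n]$, exactly as in the proof of Lemma \ref{lem:restatiyah}); everything else is a direct transcription of the smooth-divisor argument applied factor-by-factor. So I would present the proof as: (1) cite Corollary \ref{prop:strucredfirst} for the existence of $Q = O(2)^nND[n]$; (2) write down the map above; (3) check well-definedness, bracket-compatibility, and that it lands in $\At(Q)$, referring to Lemmas \ref{lem:atiso2}, \ref{lem:restatiyah}, \ref{lem:confmetautos}; (4) conclude it is an isomorphism, identifying the target with the abstract quotient $P/(\rr_+)^n$.
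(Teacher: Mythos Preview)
Your proposal is correct and follows essentially the same route as the paper: the paper's proof simply picks non-negative representatives $f_1,\ldots,f_n$, records the global splitting $L_r = L_{r_1}\oplus\cdots\oplus L_{r_n}$ with generators $\mathcal{E}_{D_i}$, and writes down exactly your map $[X]\mapsto X - \sum_i \tfrac{\mathcal{L}_X(f_i)}{f_i}\mathcal{E}_{D_i}$ as the desired isomorphism. Your outline in fact supplies more of the verification details (well-definedness on the quotient, target being $\At(Q)$, connection to the abstract $P/(\rr_+)^n$) than the paper's terse proof does.
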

\begin{proof}
Let $f_1,\ldots,f_n$ be non-negative representatives of the smooth elliptic ideals. Let $L_r = L_{r_1}\oplus \cdots \oplus L_{r_n}$ be the ideal subalgebroid from Lemma \ref{lem:atiso2} and let $\mathcal{E}_{D_1},\ldots,\mathcal{E}_{D_n}$ denote the preferred generators. Let $f_1,\ldots,f_n$ be non-negative representatives of $I_{\abs{D_1}},\ldots,I_{\abs{D_n}}$. Then
\begin{align*}
\rest{\elli}{D[n]}/L_r &\rightarrow \At(Q),\\
[X] &\mapsto X - \frac{\mathcal{L}_X(f_1)}{f_1}\mathcal{E}_{D_1}-\ldots - \frac{\mathcal{L}_X(f_n)}{f_n}\mathcal{E}_{D_n},
\end{align*}
is the desired isomorphism.
\end{proof}
In this case $\det(L_{r}^*)$ is trivial as an $\At(O(2)^n)$-representation and we conclude that the total radial residue map becomes a map:
\begin{equation*}
\Res_r^n : \Omega^\bullet(\elli) \rightarrow \Omega^{\bullet-n}(\At(O(2)^nND[n])).
\end{equation*}

\section{Cohomology of the elliptic tangent bundle}\label{sec:4}
Using the total radial residue map introduced in the previous section, we will compute the Lie algebroid cohomology of the elliptic tangent bundle.\\\\
Let $I_{\abs{D}}$ be an elliptic divisor of intersection number $n$, in this section we will write $\elli^n$ to keep track of this number and to simpify notion we write:
\begin{equation*}
\Omega^\bullet_{\text{res}}(\elli^n) := \Omega^\bullet(\At(Q));\det(L_{r}^*)),
\end{equation*}
and denote its cohomology by $H^{\bullet}_{\text{res}}(\elli^n)$. 

Recall that by Lemma \ref{lem:restrict} if $i \leq n$, then the restriction of $I_{\abs{D}}$ to $M\backslash D(i+1)$ defines an elliptic divisor of intersection number $i$, which we denote by $I_{\abs{D\backslash D(i+1)}}$. Therefore the inclusion $\iota_i : M\backslash D(i+1) \hookrightarrow M$ induces a cochain map:
\begin{equation*}
\iota^*_i : \Omega^\bullet (\elli^n) \rightarrow \Omega^{\bullet}(\mathcal{A}^{i}_{\abs{D \backslash D(i)}}).
\end{equation*}
Consequently, we can consider the radial residue map for the divisor $I_{\abs{D\backslash D(i+1)}}$ and the composition:
\begin{equation*}
\text{Res}_r^{i}\comp \iota^*_i : \Omega^{\bullet}(\elli^n) \rightarrow \Omega^{\bullet-i}_{\text{res}}(\mathcal{A}^{i}_{\abs{D \backslash D(i+1)}}),
\end{equation*}
for each $1 \leq i \leq n-1$.
\begin{theorem}\label{th:comology}
Let $I_{\abs{D}}$ be an elliptic divisor with intersection number $n$. Then
\begin{align}\label{eq:cohom}
H^k(\elli^n) &\rightarrow H^k(M \backslash D) \oplus \bigoplus_{i=1}^{n} H^{k-i}_{\text{\emph{res}}}(\mathcal{A}^{i}_{\abs{D \backslash D(i+1)}})\\
[\alpha] &\mapsto (\iota_0^*[\alpha],\Res_r^1\comp \iota_1^*[\alpha],\ldots,\Res_r^{n-1}\comp \iota_{n-1}^*[\alpha],\Res_r^n[\alpha])\nonumber
\end{align}
is an isomorphism.
\end{theorem}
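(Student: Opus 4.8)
The plan is to proceed by induction on the intersection number $n$, using a Mayer--Vietoris-type argument that peels off the top stratum $D[n]$ at each stage. For $n=0$ the statement is trivial, and the case $n=1$ is essentially the computation of the cohomology of the elliptic tangent bundle of a smooth elliptic divisor, which (as remarked in the text) goes back to \cite{CG17}: one has a short exact sequence of complexes relating $\Omega^\bullet(\elli^1)$, the forms pulled back from $M\backslash D$, and the residue complex on $\At(O(2)ND)$, and the connecting map is understood. So assume the theorem holds for elliptic divisors of intersection number at most $n-1$, and let $I_{\abs{D}}$ have intersection number $n$.

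The key step is to set up a short exact sequence of complexes
\begin{equation*}
0 \longrightarrow \Omega^\bullet_{\mathrm{rel}}(\elli^n) \longrightarrow \Omega^\bullet(\elli^n) \xrightarrow{\ \Res_r^n\ } \Omega^{\bullet-n}_{\mathrm{res}}(\elli^n) \longrightarrow 0,
\end{equation*}
where $\Omega^\bullet_{\mathrm{rel}}(\elli^n)$ is the subcomplex of forms whose total radial residue along $D[n]$ vanishes. That $\Res_r^n$ is a surjective cochain morphism follows from Proposition \ref{prop:res maps} together with the local normal form of Lemma \ref{lem:ellinormform} (the relevant $\beta_i$ are $d\log r_i$, which are $d_{\elli}$-closed) and Lemma \ref{lem:atiso2}. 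The point is then to identify the cohomology of the two outer complexes. For the quotient, one shows that restriction to $D[n]$ is a quasi-isomorphism onto the residue complex (a tubular-neighbourhood/homotopy-invariance argument for Lie algebroids, using that $\rest{\elli}{D[n]}\simeq\At(P)$ by Lemma \ref{lem:restatiyah} and the $(\rr_+)^n$-fibration $P\to Q$), giving $H^{\bullet-n}_{\mathrm{res}}(\elli^n)$. For the subcomplex $\Omega^\bullet_{\mathrm{rel}}(\elli^n)$, I claim restriction along $\iota_{n-1}$ induces an isomorphism in cohomology onto $H^\bullet(\mathcal{A}^{n-1}_{\abs{D\backslash D(n)}})$: a form on $M\backslash D(n)$ of intersection number $n-1$ extends over $D[n]$ to a form in $\elli^n$ precisely up to the ambiguity measured by the residue, and a collar/excision argument near $D[n]$ shows this extension is unique up to $d_{\elli}$-exact terms. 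Then the inductive hypothesis, applied to $\mathcal{A}^{n-1}_{\abs{D\backslash D(n)}}$, identifies $H^\bullet(\mathcal{A}^{n-1}_{\abs{D\backslash D(n)}})$ with $H^\bullet(M\backslash D)\oplus\bigoplus_{i=1}^{n-1}H^{\bullet-i}_{\mathrm{res}}(\mathcal{A}^i_{\abs{D\backslash D(i+1)}})$, and the residue maps in the inductive decomposition are exactly the $\Res_r^i\comp\iota_i^*$ appearing in the statement, because residues can be computed on the smaller open set $M\backslash D(n)$ where they already live (they are supported on $D[i]$ for $i\le n-1$).

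It then remains to assemble the long exact sequence of the short exact sequence above and check that the connecting homomorphism vanishes, so that the sequence splits as a direct sum. The splitting should come from the fact that a closed form with prescribed total radial residue $r\in\Omega^{\bullet-n}_{\mathrm{res}}(\elli^n)$ can be built by hand: if $r$ is closed, one pulls it back via a tubular neighbourhood projection and wedges with $d\log r_1\wedge\cdots\wedge d\log r_n$ (suitably interpreted with the $S_n$-twist), producing a closed form in $\Omega^\bullet(\elli^n)$ whose residue is $r$; this is a chain-level right inverse to $\Res_r^n$ on cocycles, which kills the connecting map. Combined with the compatibility of all the residue maps across the strata, this yields that \eqref{eq:cohom} is an isomorphism.

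The main obstacle I expect is the middle step: making precise, and proving, that $\Omega^\bullet_{\mathrm{rel}}(\elli^n)\hookrightarrow\Omega^\bullet(\elli^n)$ followed by $\iota_{n-1}^*$ is a quasi-isomorphism onto $\Omega^\bullet(\mathcal{A}^{n-1}_{\abs{D\backslash D(n)}})$. This is where one must control what happens at the deepest stratum $D[n]$: one needs a collar neighbourhood of $D[n]$ inside $M\backslash D(n)$ that is compatible with the elliptic structure, a Poincaré-lemma-type statement for the elliptic complex on the model $\rr^{2n}\times\rr^l$ restricted to residue-free forms, and a Mayer--Vietoris gluing that does not destroy the residue-vanishing condition. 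The $S_n$-twisting in the structure group (hence in $\det L_r^*$ and in $Q$) also needs care throughout, though away from a further codimension-one locus in $D[n]$ it can be trivialised and the global statement recovered by naturality.
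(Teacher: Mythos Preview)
Your inductive short-exact-sequence approach is plausible and can likely be made to work, but it is a genuinely different route from the paper's argument. The paper does not induct on $n$ and does not peel off one stratum at a time. Instead it uses Deligne's observation directly: both sides of \eqref{eq:cohom} are computed by complexes of soft sheaves on $M$ (after pushing forward along the $\iota_i$), and the map in \eqref{eq:cohom} is induced by a morphism of such complexes, so it suffices to check it is a quasi-isomorphism on local models. One then takes a coordinate chart $U$ around a point of $D[l]$ as in Lemma \ref{lem:ellinormform}, writes $H^\bullet(\rest{\elli}{U})$ explicitly as the free algebra $\langle 1,d\log r_1,\ldots,d\log r_l,d\theta_1,\ldots,d\theta_l\rangle$, identifies each local target $H^{\bullet}_{\text{res}}(\mathcal{A}^i_{\abs{D\backslash D(i+1)}})$ on $U$ with copies of $H^\bullet(T^l)$ indexed by $i$-element subsets of $\{1,\ldots,l\}$, and checks by inspection that $(\iota_0^*,\Res_r^1\comp\iota_1^*,\ldots,\Res_r^n)$ hits all generators with disjoint kernels. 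No induction, no long exact sequence, no splitting argument.

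The trade-off is this: your approach is more structural and makes the role of the top residue transparent, but the step you flag as the main obstacle (showing that $\iota_{n-1}^*$ restricted to $\Omega^\bullet_{\mathrm{rel}}(\elli^n)$ is a quasi-isomorphism onto $\Omega^\bullet(\mathcal{A}^{n-1}_{\abs{D\backslash D(n)}})$) itself requires a local computation near $D[n]$, namely the Poincar\'e-lemma-type statement you mention, together with the inductive description of the cohomology on the punctured local model. When unpacked, that computation is essentially the same bookkeeping the paper does in one shot; you have just redistributed it across the induction. Likewise, your splitting of the long exact sequence via ``wedge with $d\log r_1\wedge\cdots\wedge d\log r_n$'' is only local because of the $S_n$-twist in $\det(L_r^*)$, so making it global costs a partition-of-unity or naturality argument that the paper avoids altogether by never writing a long exact sequence. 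In short: your strategy is sound, but the paper's direct local-model argument is shorter and sidesteps both the quasi-isomorphism step and the vanishing of the connecting map.
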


\begin{proof}
The argument uses the observation from \cite{Del71}, that it suffices to proof that the above map induces an isomorphism in sheaf cohomology. Therefore, let $x \in D[l]$ and let $U$ be a neighbourhood of $x$ as in Lemma \ref{lem:ellinormform}, that is $U \cap D[l+1] = \emptyset$, $U = \rr^{2l}\times \rr^m$ and $I_{\abs{D}}$ is the standard elliptic divisor of intersection number $l$; $I_{\abs{D_1}}\cdot \ldots \cdot I_{\abs{D_l}}$.

Below we will implicitly push-forward sheaves using the inclusion maps $\iota_j$ whenever required.

%
%
%
%
The left hand side of equation \eqref{eq:cohom} can be easily described as the following free algebra:

\begin{equation*}
	H^{\bullet}(U,\elli^n) = \inp{1,d\log r_1,\ldots,d\log r_l,d\theta_1,\ldots,d\theta_l}.
\end{equation*}
Now let $i \leq l$. By Lemma \ref{lem:gncresmap} $\Omega_{\res}^{\bullet}(\mathcal{A}^i_{\abs{D\backslash D(i+1)}}) = \Omega^{\bullet}(\At(O(2)^iND[i]))$. Because $D[i]$ is orientable, Lemma \ref{lem:Atiyahliealg} furthermore implies $\Omega_{\res}^{\bullet}(\mathcal{A}^i_{\abs{D\backslash D(i+1)}})= \Omega^{\bullet}(\At(T^iND[i]))$. And using invariant invariant cohomology $H^{\bullet}(\At(T^iND[i])) = H^{\bullet}_{\text{dR}}(T^iND[i])$. We will thus view $\Res_r^i\comp\iota_i^*$ as a map into the latter cohomology.

First, let us consider the map $\iota_0^*$. By an elementary argument $U\backslash D$ is homotopic to $T^l$, and $\iota_0^*$ sends the forms $\alpha$, with $\alpha \in \inp{1,d\theta_1,\ldots,d\theta_l}$ precisely to the generators of $H^{\bullet}(T^l)$, proving that $\iota_0^*$ is surjective. For $i> l$, $\iota^*_i : H^{\bullet}(U,\elli^n) \rightarrow H^{\bullet}_{\text{res}}(U,\mathcal{A}^i_{\abs{D\backslash D(i+1)}})$ is the zero map as $\mathcal{A}^i_{\abs{D\backslash D(i+1)}} = TU$ on $U$ and $U$ is contractible.

 We will prove that all maps $\iota_i^*\comp \Res_r^i$ with $1\leq i \leq l$ are surjective and have disjoint kernels.

First we need to establish the homotopy type of $D[i]$; remark that
\begin{equation*}
D[i] = \overset{\cdot}{\bigcup_{(j_1,\ldots,j_i)}} (D_{j_1}\cap \cdots \cap D_{j_i})\backslash D(i+1),
\end{equation*}
where the sum runs over all multi-indices of length $i$. By an elementary argument, $(D_{j_1}\cap \cdots \cap D_{j_i})\backslash D(i+1)$ is homotopic to $T^{l-i}$. Moreover, $ND[i]$ decomposes as a direct sum, and thus so does the associated $T^i$-bundle:
\begin{equation*}
T^iND[i] = \bigoplus_{(j_1,\ldots,j_i)} S^1ND_{j_1}|_{D[i]} \oplus \cdots \oplus S^1ND_{j_i}|_{D[i]}
\end{equation*}
Because each $D_{j_k}$ is contractible, the above is a sum of trivial circle bundles, and thus $T^iND[i] = T^i\times D[i]$. Combining this with the description of the homotopy type of $D[i]$ we conclude that:
\begin{equation}\label{eq:whatever}
H^{\bullet}_{\text{res}}(U,\mathcal{A}^i_{\abs{D\backslash D(i+1)}}) \simeq \bigoplus_{(j_1,\ldots,j_i)} H^{\bullet}(T^l).
\end{equation}
Given $\alpha \in \Omega^k(\mathcal{A}^i_{\abs{D\backslash D(i+1)}})$, then $\Res^i_r(\alpha)$ is only non-zero if $\alpha$ contains a term of the form $d\log r_{j_1} \wedge \cdots \wedge d\log r_{j_i} \wedge \beta$, with $\beta \in  \langle 1,d\theta_1,\ldots d\theta_l \rangle$. Moreover, $\Res_r^i$ takes forms of this form precisely to the generators of the $H^{\bullet}(T^n)$ in \eqref{eq:whatever} corresponding to the multi-index $(j_1,\ldots,j_i)$. This shows that $\iota_i^*\comp\Res_r^{i}$ is surjective, and that the kernels of these maps for different values of $i$ are all disjoint. We conclude that $(\iota_0^*,\iota_1^*\comp \Res_r^1,\ldots,\iota_{n-1}^* \comp \Res_r^{n-1},\Res_r^n)$ is an isomorphism.
\end{proof}

When the elliptic divisor is a co-orientable normal crossing divisor the cohomology becomes easier to describe:

\begin{corollary}\label{cor:main}
If $I_{\abs{D}}$ is a global normal crossing elliptic divisor for which $D[1]$ is co-orientable. Then
\begin{equation*}
H^k(\elli) \simeq H^k(M \backslash D) \oplus \bigoplus_{i=1}^n H^{k-i}(T^iN(D[i])).
\end{equation*}
\end{corollary}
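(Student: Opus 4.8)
The plan is to derive Corollary~\ref{cor:main} from Theorem~\ref{th:comology} by showing that under the stated hypotheses the ``residue'' cohomology $H^{k-i}_{\text{res}}(\mathcal{A}^i_{\abs{D\backslash D(i+1)}})$ simplifies to ordinary de~Rham cohomology $H^{k-i}(T^iN(D[i]))$, and that the divisors $\mathcal{A}^i_{\abs{D\backslash D(i+1)}}$ may be replaced by the full stratum $D[i]$. The first reduction is essentially already carried out in the proof of Theorem~\ref{th:comology}: since $I_{\abs{D}}$ is a global normal crossing divisor, Lemma~\ref{lem:gncresmap} identifies $\Omega^\bullet_{\text{res}}(\mathcal{A}^i_{\abs{D\backslash D(i+1)}})$ with $\Omega^\bullet(\At(O(2)^i N D[i]))$, so the coefficient system $\det(L_r^*)$ disappears. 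Then the co-orientability of $D[1]$, via the third bullet of Corollary~\ref{prop:strucredfirst}, gives a further reduction of the structure group to $T^i$, so that $\Omega^\bullet_{\text{res}} = \Omega^\bullet(\At(T^i N D[i]))$. Invoking the identification of Atiyah-algebroid cohomology of a torus bundle with the de~Rham cohomology of the total space (the result labelled Lemma~\ref{lem:Atiyahliealg} / ``invariant cohomology'' in the proof of Theorem~\ref{th:comology}), we get $H^{k-i}_{\text{res}}(\mathcal{A}^i_{\abs{D\backslash D(i+1)}}) \cong H^{k-i}_{\text{dR}}(T^i N D[i])$.

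Next I would address the passage from $D[i]$ --- which a priori sits inside $M\backslash D(i+1)$ and is only the open top stratum of the truncated divisor --- to the genuinely closed stratum appearing in the statement. Here the key point is that for a global normal crossing divisor the stratum $D[i]$ is itself a closed (embedded) submanifold: it equals the disjoint union $\bigsqcup_{(j_1,\dots,j_i)}(D_{j_1}\cap\cdots\cap D_{j_i})\setminus D(i+1)$, but since the $D_j$ intersect transversely and globally, each such piece is already a manifold and the torus bundle $T^i N D[i]$ over it is exactly the object named in the corollary. So in fact no further argument is needed beyond unwinding notation: $H^{k-i}_{\text{dR}}(T^i N D[i])$ is literally $H^{k-i}(T^i N(D[i]))$ as written. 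Finally, the first summand $H^k(M\setminus D)$ is unchanged from Theorem~\ref{th:comology}, so substituting all these identifications into \eqref{eq:cohom} yields the claimed direct-sum decomposition.

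I expect the only genuinely delicate point to be the naturality/consistency of the identifications: one must check that the isomorphism of Lemma~\ref{lem:gncresmap} and the structure-group reductions of Corollary~\ref{prop:strucredfirst} are compatible, so that the composite really lands in $H^\bullet(T^i N D[i])$ and not merely in something abstractly isomorphic to it, and that this is compatible with the maps $\Res_r^i\comp\iota_i^*$ of Theorem~\ref{th:comology}. Since Theorem~\ref{th:comology} already asserts the map \eqref{eq:cohom} is an isomorphism, and all the reductions here are canonical once $D[1]$ is co-oriented and representatives $f_1,\dots,f_n$ are chosen (and different choices give isomorphic Atiyah algebroids, hence the same cohomology), this amounts to bookkeeping rather than new mathematics. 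I would therefore present the proof as: apply Theorem~\ref{th:comology}; for each $i$, rewrite the $i$-th residue summand using Lemma~\ref{lem:gncresmap}, the co-orientation reduction to $T^i$, and the torus-bundle cohomology identification; conclude.
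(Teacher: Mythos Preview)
Your proposal is correct and follows essentially the same route as the paper's own proof: apply Theorem~\ref{th:comology}, then for each $i$ reduce the residue summand to $H^{\bullet}(\At(T^iND[i]))$ via Corollary~\ref{prop:strucredfirst} and Lemma~\ref{lem:Atiyahliealg}, and finally identify this with $H^{\bullet}_{\mathrm{dR}}(T^iND[i])$ using invariant cohomology. The only cosmetic difference is that you route explicitly through Lemma~\ref{lem:gncresmap} to first trivialise the coefficient system $\det(L_r^*)$ before reducing to $T^i$, whereas the paper collapses these two steps; your extra paragraphs on the ``passage from $D[i]$'' and on naturality are unnecessary (the statement only asserts an isomorphism, and $D[i]$ means the same stratum throughout), but they do no harm.
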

\begin{proof}
By Corollary \ref{prop:strucredfirst} $ND[i]$ admits a $T^i$-structure group reduction, and using Lemma \ref{lem:Atiyahliealg} we see that $\Omega_{\Res}^{\bullet}(\mathcal{A}^i_{\abs{D\backslash D(i+1)}}) = \Omega^{\bullet}(\At(T^iND[i]))$.
Therefore by Theorem \ref{th:comology} we obtain that the cohomology is isomorphic to
\begin{align*}
H^k(\elli) \simeq H^k(M \backslash D) \oplus \bigoplus_{i=1}^n H^{k-i}(\At(T^iN(D[i])))
\end{align*}
Using invariant cohomology we have $H^{\bullet}(\At(T^iND[i])) = H^{\bullet}_{\text{dR}}(T^iND[i])$, and therefore we arrive at the required result.
\end{proof}
In applications we are often interested in the case when the manifold is four-dimensional, and hence $D[2]$ consists of a collection of points. In this case the cohomology also simplifies for general divisors:
\begin{corollary}\label{cor:final}
Let $I_{\abs{D}}$ be an elliptic divisor on $M^4$ for which $D[1]$ is co-orientable. Then
\begin{equation*}
H^k(\elli) \simeq H^k(M\backslash D) \oplus H^{k-1}(S^1ND[1]) \oplus  H^{k-2}(T^2)^{(\# D[2])}.
\end{equation*}
\end{corollary}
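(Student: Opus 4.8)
The plan is to deduce Corollary \ref{cor:final} from Corollary \ref{cor:main} together with Theorem \ref{th:comology}, treating separately the top stratum $D[2]$, which in dimension four is zero-dimensional, and the first stratum $D[1]$. Since $M$ is four-dimensional, an elliptic divisor has intersection number at most $\floor{4/2}=2$, so there are only two strata to account for. If the intersection number is $0$ or $1$ the statement is immediate (the $D[2]$ summand is empty), so I will assume intersection number exactly $2$.

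First I would observe that $D[2]$ is a closed zero-dimensional submanifold, hence a finite set of $\# D[2]$ points, and that $N(D[2])$ is a rank-four bundle over this finite set which is automatically trivial; consequently $T^2 N(D[2])$ is a disjoint union of $\# D[2]$ copies of the torus $T^2$, giving $H^{k-2}(T^2ND[2]) \simeq H^{k-2}(T^2)^{(\# D[2])}$. This identification does not require $D[2]$ to be co-orientable, so it applies to the general divisor. For the term coming from $i=1$, I would invoke Theorem \ref{th:comology}: the $i=1$ summand is $H^{k-1}_{\text{res}}(\mathcal{A}^1_{\abs{D\backslash D(2)}})$, and since $D[1]$ is assumed co-orientable, Corollary \ref{prop:strucredfirst} provides a $T^1 = S^1$-structure group reduction of $ND[1]$, so by Lemma \ref{lem:Atiyahliealg} this residue complex is $\Omega^\bullet(\At(S^1ND[1]))$, whose cohomology computes via invariant cohomology to $H^{\bullet}_{\text{dR}}(S^1ND[1])$. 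Shifting degrees gives the $H^{k-1}(S^1ND[1])$ summand.

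Assembling these two observations with the $i=0$ term $H^k(M\backslash D)$ from Theorem \ref{th:comology} yields the claimed decomposition. The one subtlety worth spelling out is that Corollary \ref{cor:main} is stated only for global normal crossing divisors, whereas here the divisor may be merely self-crossing; the point is that Theorem \ref{th:comology} itself does not require the global normal crossing hypothesis, and the only place it was used in deriving Corollary \ref{cor:main} was to identify the residue complexes via Lemma \ref{lem:gncresmap}. For $i=1$ this identification is local and follows from Corollary \ref{prop:strucredfirst} together with co-orientability of $D[1]$; for $i=n=2$ the residue target is the full complex $\Omega^\bullet_{\text{res}}(\elli^2)=\Omega^\bullet(\At(Q))$ over the finite set $D[2]$, which again reduces to $\Omega^\bullet(\At(T^2 N(D[2])))$ since over a point the $O(2)^2\rtimes S_2$-bundle is trivial.

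The main obstacle — really the only thing requiring care — is the bookkeeping for $D[2]$: one must check that the principal bundle $Q$ over the discrete set $D[2]$ is trivial so that the residue cohomology genuinely splits as $\# D[2]$ copies of $H^{\bullet}(T^2)$, and that no co-orientation hypothesis sneaks in there. Since a principal bundle over a point is trivial, this is automatic, but it is the step where the general (non-co-orientable at $D[2]$) case differs from a naive application of Corollary \ref{cor:main}. Everything else is degree shifting and invoking results already established.
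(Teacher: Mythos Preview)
Your proposal is correct and follows the route the paper intends: apply Theorem~\ref{th:comology} directly and simplify the two residue summands using the low-dimensional coincidences. The paper states Corollary~\ref{cor:final} without proof, and your argument is precisely the natural filling-in.

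One small sharpening of phrasing: for the $i=1$ summand you write that the identification ``is local.'' It is actually global, for a cleaner reason than you state. The divisor $I_{\abs{D\backslash D(2)}}$ on $M\setminus D(2)$ has intersection number one, i.e.\ it is a \emph{smooth} elliptic divisor, and a smooth elliptic divisor is trivially a global normal crossing divisor (with a single factor). Hence Lemma~\ref{lem:gncresmap} and the third bullet of Corollary~\ref{prop:strucredfirst} apply on the nose, giving the $S^1$-reduction of $ND[1]$ and the identification $H^{\bullet}_{\text{res}}(\mathcal{A}^1_{\abs{D\backslash D(2)}})\simeq H^{\bullet}(S^1ND[1])$ without any locality caveat. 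Your treatment of $D[2]$ is exactly right: over a finite set every principal bundle is trivial, so $\At(Q)$ is fibrewise the abelian Lie algebra $\rr^2$ with $\det(L_r^*)$ the trivial representation, and the Chevalley--Eilenberg cohomology recovers $H^\bullet(T^2)$ at each point.
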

\section{Examples and outlook}\label{sec:5}
In this section we give some explicit examples of elliptic divisors and compute their associated cohomology. One class of examples arises through complex divisors:
\begin{definition}
A \textbf{smooth complex log divisor} $(L,\sigma)$ consists of a complex line bundle $L$ together with a transversely vanishing section $\sigma$.
\end{definition}

\begin{definition}
A \textbf{(self-crossing) complex log divisor} $(L,\sigma)$ is a complex divisor on $M$ with the property that for every point $p \in M$ there exists a neighbourhood $U$ of $p$ such that 
\begin{align*}
I_\sigma(U) = I_1\cdots \ldots \cdots I_j,
\end{align*}
where the $I_1,\ldots,I_j$ correspond to complex log divisors with transversely intersecting vanishing loci.
\end{definition}

\begin{remark}
Let $(L,\sigma)$ be a complex log divisor on $M$. Then then $(L\otimes \bar{L},\sigma \otimes \bar{\sigma})$ is the complexification of a real line bundle $(R,q)$. For every point $p \in D$ there exists local coordinates $z_1,\ldots,z_n$ such that $\sigma$ corresponds to the function $z_1 \cdot \ldots \cdot z_n$. Consequently, $q$ corresponds to the function $\abs{z_1}^2\cdot \ldots \cdot \abs{z_n}^2$ and thus $(R,q)$ defines an elliptic divisor. We call this the \textbf{associated elliptic divisor}.
\end{remark}

Just as for the elliptic tangent bundle, we can define a Lie algebroid associated to a complex divisor:
\begin{definition}
Let $(L,\sigma)$ be a complex log divisor. The complex vector fields presering $I_{\sigma}$ define a complex Lie algebroid $\mathcal{A}_D \rightarrow T_{\cc}M$, called the \textbf{complex log tangent bundle}.
\end{definition}
In \cite{CKW20} we use this Lie algebroid to study certain generalized complex structures. There, we also compute the Lie algebroid cohomology:
\begin{theorem}
Let $(L,\sigma)$ be a complex log divisor on a manifold $M$. Then the inclusion $\iota : M\backslash D \hookrightarrow M$ induces an isomorphism:
\begin{align*}
H^{\bullet}(\mathcal{A}_D) \simeq H^{\bullet}(M\backslash D;\cc).
\end{align*}
\end{theorem}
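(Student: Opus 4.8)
The plan is to mimic the sheaf-theoretic strategy already used for Theorem~\ref{th:comology}, following Deligne~\cite{Del71}: it suffices to show the map on sheaves associated to the presheaf $U \mapsto H^\bullet(\mathcal{A}_D|_U)$ is an isomorphism, since both sides are fine-sheaf resolutions globally. So I would reduce immediately to a local computation near a point $x \in D$ lying in a stratum of intersection number $l$, choosing a polydisc neighbourhood $U \cong \cc^l \times \cc^m$ (real dimension $2(l+m)$) in which $\sigma$ corresponds to the coordinate function $z_1 \cdots z_l$, as in the remark preceding the statement.

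On such a $U$, sections of $\mathcal{A}_D$ are spanned over $C^\infty(U;\cc)$ by $z_1\partial_{z_1}, \ldots, z_l\partial_{z_l}$ together with $\partial_{z_{l+1}}, \ldots$ and the conjugate vector fields $\partial_{\bar z_j}$; dually, $\Omega^1(\mathcal{A}_D|_U)$ is generated by the closed forms $d\log z_1, \ldots, d\log z_l$, by $dz_{l+1}, \ldots$, and by all the $d\bar z_j$. The key step is then to identify the Lie algebroid de Rham complex on $U$ with the algebraic/topological model: the complex $\Omega^\bullet(\mathcal{A}_D|_U)$ should deformation-retract onto the free graded-commutative $\cc$-algebra generated by the closed degree-one forms $d\log z_1, \ldots, d\log z_l$, exactly as $U \setminus D \simeq (\cc^*)^l \times \{\ast\} \simeq T^l$ has $H^\bullet(U\setminus D;\cc) = \wedge^\bullet\langle [d\log z_1], \ldots, [d\log z_l]\rangle$ (where under $\iota^*$ the class of $d\log z_j$ maps to the standard circle generator $\frac{1}{2\pi i}\frac{dz_j}{z_j}$ restricted to $U\setminus D$). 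Concretely I would run a Poincar\'e-lemma-type homotopy operator contracting all the ``smooth'' coordinate directions $z_{l+1}, \ldots, z_{l+m}$ and $\bar z_1, \ldots, \bar z_{l+m}$ — these genuinely appear as $d$ of functions, so the usual integration-along-rays homotopy kills them — and then a separate argument in each remaining $z_j\partial_{z_j}$ direction showing that the only surviving cohomology is generated by $d\log z_j$ (the radial part $d\log|z_j|^2 = d\log z_j + d\log\bar z_j$ is exact as $d$ of a genuine smooth function, and the angular part is $d\log z_j - d\log \bar z_j$ up to a constant, closed but not exact in the algebroid complex). Assembling, $H^\bullet(\mathcal{A}_D|_U)\cong \wedge^\bullet_\cc\langle d\log z_1,\ldots,d\log z_l\rangle$, and $\iota^*$ sends this isomorphically onto $H^\bullet(U\setminus D;\cc)$; passing to associated sheaves, $\iota^*$ is a quasi-isomorphism.

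The main obstacle, as in Theorem~\ref{th:comology}, is making the local Poincar\'e-lemma computation genuinely rigorous rather than just plausible: one must be careful that the homotopy operator integrating out the smooth directions preserves the space of $\mathcal{A}_D$-forms (it does, because those directions sit inside the algebroid with no singular weights), and that after doing so the reduced complex in the $z_j\partial_{z_j}$ directions is literally the algebroid de Rham complex of the standard log divisor on $\cc$, whose cohomology is a one-line check using that $d\log z = \frac{dz}{z}$ is closed, non-exact, and together with $1$ spans $H^\bullet$. A secondary point worth spelling out is why the complex coefficients appear correctly: since $\mathcal{A}_D$ is a complex Lie algebroid, $\Omega^\bullet(\mathcal{A}_D)$ is already a complex of $\cc$-vector spaces and its local cohomology sheaves are $\cc$-modules, so the comparison is with $H^\bullet(M\setminus D;\cc)$ and not with real coefficients. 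Everything else — that both sides are computed by fine sheaves so that local isomorphism implies global, and the formal Deligne descent argument — is identical to the proof of Theorem~\ref{th:comology} and can simply be cited.
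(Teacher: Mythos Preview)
Your approach is correct and is precisely the template of Theorem~\ref{th:comology}: reduce to a local statement via Deligne's sheaf argument, then verify on a standard chart $U$ with $I_\sigma = \langle z_1\cdots z_l\rangle$ that $\iota^*$ identifies $H^\bullet(\mathcal{A}_D|_U) = \wedge^\bullet_\cc\langle[d\log z_1],\ldots,[d\log z_l]\rangle$ with $H^\bullet(U\setminus D;\cc) \cong H^\bullet(T^l;\cc)$. Note that the present paper does not actually prove this theorem --- it is quoted from \cite{CKW20} --- but the argument there follows the same pattern you describe, so there is nothing further to compare.

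One caution: your parenthetical about the radial/angular decomposition is garbled. The form $d\log\bar z_j$ is \emph{not} in $\Omega^1(\mathcal{A}_D)$ (the algebroid generator in the antiholomorphic direction is $\partial_{\bar z_j}$, not $\bar z_j\partial_{\bar z_j}$, so the dual generator is $d\bar z_j$, not $d\log\bar z_j$), and $\log|z_j|^2$ is not smooth across $D$, so neither side of your identity lives in the algebroid complex on $U$. The correct one-variable check is simply that $\Omega^1(\mathcal{A}_D|_\cc)$ is generated by $d\log z$ and $d\bar z$; the latter is exact ($=d(\bar z)$), the former is closed but not exact, and a $\bar\partial$-argument kills $H^2$. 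No radial/angular splitting is available or needed inside $\Omega^\bullet(\mathcal{A}_D)$. With that correction the outline goes through.
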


The following example plays an important role in constructing examples of stable generalized complex structures in \cite{CKW20}:
\begin{example}
Let $M = \mathbb{C}P^2$ and let $D$ be a normal crossings divisor consisting of three lines intersecting transversely, for instance $D = \set{z_0z_1z_2 = 0}$. The associated holomorphic line bundle with section $(L,\sigma)$ is a complex log divisor, and let $(R,q)$ be the associated elliptic divisor. We will now compute the cohomology of the associated elliptic tangent bundle:
\begin{itemize}
\item The complement $M\backslash D = (\mathbb{C}^*)^2$ and thus homotopic to $T^2$.
\item The stratum $D[1]$ consists of three disjoint cylinders, and therefore every complex line bundle on it is trivial. Consequently $S^1ND[1]$ is homotopic to three disjoint tori.
\item The stratum $D[2]$ consists of three points.
\end{itemize}
Corollary \ref{cor:final} now gives us the following description of the elliptic cohomology:
\begin{center}
\begin{tabular}{|l|l|l|l|l|l|l|}
\hline
$k$ & 0 & 1 & 2 & 3 & 4 & otherwise \\
\hline
$H^k(\elli)$ & $\rr$ & $\rr^5$ & $\rr^{10}$ & $\rr^9$ & $\rr^3$ & 0\\
\hline 
\end{tabular}
\end{center}
The comohology of the complex log tangent bundle is much simpler and given by:
\begin{center}
\begin{tabular}{|l|l|l|l|l|}
\hline
$k$ & 0 & 1 & 2 & otherwise \\
\hline
$H^k(\mathcal{A}_D)$ & $\cc$ & $\cc^2$ & $\cc$ & 0\\
\hline
\end{tabular}
\end{center}
\end{example}
The following example plays an important role in the construction of codimension-one symplectic foliations in work in progress of Cavalcanti and Crainic:
\begin{example}
Let $M = \mathbb{C}P^2$ and let $D$ be the smooth complex divisor given by the zero-set of a smooth cubic, for instance $D = \set{z_0^3+z_1^3+z_2^3=0}$. By the genus-degree formula we know that $D$ is isomorphic to a torus. The cohomology of the complement can be computed using the long exact sequence associated to the inclusion $D \hookrightarrow M$:
\begin{align*}
0 \rightarrow \rr \overset{\text{id}}{\rightarrow} \rr \rightarrow H^0(\cc P^2,D) \rightarrow 0 \rightarrow \rr^2 \rightarrow H^1(\cc P^2,D) \rightarrow \rr \overset{f}{\rightarrow} \rr \rightarrow H^2(\cc P^2,D) \rightarrow 0.
\end{align*}
Because $D$ is a complex submanifold of a Kahler manifold it is also a symplectic submanifold with respect to $\omega_{FS}$. Therefore the map $H^2(\mathbb{C}P^2) \rightarrow H^2(D)$ is an isomorphism, and the map $f$ becomes the identity, and we can easily read of the cohomology of $M\backslash D$ form the above sequence. To compute the cohomology of $P = S^1ND$ we employ the Thom-Gysin sequence, which takes the following form:
\begin{align*}
0 \rightarrow \rr \rightarrow H^0(P) \rightarrow 0 \rightarrow \rr^2 \rightarrow H^1(P) \rightarrow \rr \overset{\cup ~ e}{\rightarrow} \rr \rightarrow H^2(P) \rightarrow \rr^2 \rightarrow 0.
\end{align*}
We have $ND = \mathcal{O}(3)|_D$, because $\mathcal{O}(3)$ is non-trivial and $H^2(\mathbb{C}P^2) \rightarrow H^2(D)$ is an isomorphism, we have that the Euler-class of $P$ is non-trivial. Therefore the map $\cup ~e$ is an isomorphism, and we can easily read of the cohomology of $P$ from the sequence.
Corollary \ref{cor:final} now gives us the following description of the elliptic cohomology:
\begin{center}
\begin{tabular}{|l|l|l|l|l|l|l|}
\hline
$k$ & 0 & 1 & 2 & 3 & otherwise \\
\hline
$H^k(\elli)$ & $\rr$ & $\rr^3$ & $\rr^2$ & $\rr^2$ & 0\\
\hline 
\end{tabular}
\end{center}

Again, this is quite different from the complex log tangent bundle:
\begin{center}
\begin{tabular}{|l|l|l|l|l|}
\hline
$k$ & 0 & 1 & otherwise \\
\hline
$H^k(\mathcal{A}_D)$ & $\cc$ & $\cc^2$ & 0\\
\hline
\end{tabular}
\end{center}
\end{example}

\begin{example}
Let $f : M^4 \rightarrow \Sigma^2$ be a Lefschetz fibration with singular value $p$ and corresponding singular fibre $F= f^{-1}(p)$. Let $(E,s)$ be the complex divisor on $\Sigma$ corresponding to the divisor $\set{p} \subset \Sigma$, and define a self-crossing complex log divisor on $M$ by $(f^*E,f^*s)$. This divisor has as vanishing locus precisely the singular fibre $F$, and $D[2]$ consist precisely of the critical points on $F$.

Let $n$ be the number of critical points of $F$. Then $D[1]$ consist of a disjoint union of $n$ cylinders, and consequently $S^1ND[1]$ is homotopic to $n$ tori. 

Corollary \ref{cor:final} now gives us the following description of the elliptic cohomology:
\begin{align*}
H^{k}(\elli) = H^k(M\backslash F) \oplus \begin{tabular}{|l|l|l|l|l|l|}
\hline
$k$ & 1 & 2 & 3 & 4 & otherwise \\
\hline
 & $\rr^n$ & $\rr^{3n}$ & $\rr^{3n}$ & $\rr^n$ & 0\\
\hline
\end{tabular}
\end{align*}

\end{example}
\subsection{Outlook: Deformation theory}
We end by given an outlook on studying the deformation theory of self-crossing stable generalized complex structures and self-crossing elliptic symplectic structures.

Already the deformation theory of all smooth elliptic symplectic structures is more difficult than that of smooth stable generalized complex structures. The reason is that smooth complex log divisors are very rigid: if one deforms a complex function which vanishes transversely a bit, than one again obtains a function which vanishes transversely. This is used in \cite{CG17} to show that any two smooth complex log divisors which are deformations of each other are in fact isomorphic. Therefore, deformations of smooth stable generalized complex structures can be reduced to deformations on a fixed Lie algebroid, simplifying the discussion significantly. In the end, this ensures that deformations of smooth stable generalized complex structures are completely controlled by the cohomology of the Lie algebroid.

Smooth elliptic divisors are much less rigid than smooth complex log divisors. If $I_{\abs{D}}$ is a smooth elliptic ideal with non-negative generator $f$ then for all $\epsilon > 0$, the function $f + \varepsilon$ will be nowhere vanishing. Therefore small deformations of elliptic divisors will not necessarily be isomorphic to the original divisor, which significantly complicates the deformation theory of general elliptic symplectic structures.

Allowing for self-crossings allows for even more flexibility, so that in this case also complex log divisors are no longer rigid. The reason is that the self-crossings can be smoothend. Take for instance the complex divisor $I_D = \inp{z_1 z_2}$ on $\cc^2$, which has as vanishing locus the coordinate hyperplanes. The divisor $I_{D_t} = \inp{z_1 z_2 + t}$ is a smooth complex log divisor for all $t \in [0,1]$ and provides a deformation between $I_D$ and a complex log divisor with embedded vanishing locus. This shows that for self-crossing complex log divisors, deformation equivalent divisors need not be isomorphic.

One might restrict the discussion to deformations for which the divisors are isomorphic. We have good reasons to believe that in that case the results of \cite{CG17} can be generalized and the deformations will be controlled by the cohomology. However, deformations as the one above are very important to the theory. The particular deformation $I_{D_t}$ is used in \cite{CKW20} to show that in four dimensions any self-crossing stable generalized complex structure can be deformed into one with embedded degeneracy locus.

Therefore it is most natural to study general deformations, which would thus combine the deformation theory of the Lie algebroid with that of the elliptic symplectic structures. To study this one will have to combine the cohomology governing the deformations of the Lie algebroid, which is the deformation cohomology of Crainic-Moerdijk (\cite{CM08}) together with the Lie algebroid cohomology of the elliptic tangent bundle.

\appendix
\section{Atiyah algebroids}\label{sec:Atiyah}
This section recalls some results on Atiyah algebroids needed in this paper. These results are all classical and can be found in the literature, see for instance \cite{CF03}.
 
\begin{definition}
Let $P$ be a principal $G$-bundle over $M$. The \textbf{Atiyah algebroid} of $P$, is defined as $\At(P) = TP/G$. The anchor is induced by the differential of the projection $P\rightarrow M$. The bracket is obtained by viewing sections of $\At(P)$ as $G$-invariant vector fields on $P$.
\end{definition}

Given a vector bundle $E$, we can consider its frame bundle $\text{Fr}(E)$ and its associated Atiyah algebroid, which we will denote by $\At(E)$. We can describe its sections as fibre-wise linear vector fields:
\begin{lemma}
Let $E \rightarrow M$ be a vector bundle. Then $\Gamma(\At(E)) \simeq \mathfrak{X}^1(E)_{\lin} = \set{X \in \mathfrak{X}^1(E) : [X,\mathcal{E}] = 0}$.
\end{lemma}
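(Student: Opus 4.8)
The statement to prove is the final lemma: for a vector bundle $E \to M$, sections of the Atiyah algebroid $\At(E)$ are precisely the fibre-wise linear vector fields $\mathfrak{X}^1(E)_{\lin} = \set{X \in \mathfrak{X}^1(E) : [X,\mathcal{E}]=0}$, where $\mathcal{E}$ is the Euler (scaling) vector field on $E$. My plan is to identify both sides with $G$-invariant vector fields on the total space of the frame bundle $\Fr(E)$, where $G = \GL(r)$ and $r = \rk(E)$. First I would recall that by definition $\Gamma(\At(E)) = \Gamma(T\Fr(E)/G) \simeq \mathfrak{X}^1(\Fr(E))^G$, the $G$-invariant vector fields on the frame bundle. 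The associated bundle construction gives $E = \Fr(E) \times_G \rr^r$, so vector fields on $E$ that are compatible with this quotient — namely those induced by $G$-invariant vector fields on $\Fr(E) \times \rr^r$ that are moreover invariant under the defining $G$-action — descend to $E$; I would make precise that a $G$-invariant vector field $X$ on $\Fr(E)$ induces a vector field $\hat X$ on $E$ via the product $\Fr(E) \times \rr^r \to E$, pairing $X$ with the zero vector field on the $\rr^r$ factor.

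The core of the argument is the two inclusions. For the inclusion $\Gamma(\At(E)) \hookrightarrow \mathfrak{X}^1(E)_{\lin}$: a $G$-invariant vector field on $\Fr(E)$ produces, via the associated bundle construction, a vector field on $E$ which is linear on fibres precisely because the $G$-action on $\rr^r$ is linear and the induced field has no component along $\rr^r$; linearity is exactly the statement $[\hat X, \mathcal{E}] = 0$, since $\mathcal{E}$ generates the $\rr_{>0}$-scaling which commutes with the frame-bundle construction. Conversely, given $X \in \mathfrak{X}^1(E)_{\lin}$, I would show it arises this way: the condition $[X,\mathcal{E}] = 0$ says $X$ is homogeneous of degree zero with respect to fibre scaling, so in any local trivialisation $E|_U \simeq U \times \rr^r$ it has the form $X = X_M^i(u)\,\partial_{u^i} + A^a_b(u)\, v^b\, \partial_{v^a}$ with the fibre part linear; such $X$ lifts canonically to a $G$-invariant vector field on $\Fr(E)|_U$ and the local lifts glue because $X$ is globally defined. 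This establishes the bijection, and it is a Lie algebra isomorphism because the bracket on $\Gamma(\At(E))$ is by definition the bracket of invariant vector fields, which corresponds to the ordinary bracket of the induced linear vector fields.

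The step I expect to be the main obstacle is making the correspondence between $G$-invariant vector fields on $\Fr(E)$ and linear vector fields on $E$ genuinely canonical and bracket-preserving, rather than merely a local-coordinate matching — that is, carefully setting up the intermediate object $\Fr(E) \times \rr^r$ with its two commuting group actions and checking that passing to the quotient in two stages (first by the $\rr^r \rtimes$ nothing, i.e.\ just projecting along the trivial $\rr^r$-field, then by $G$) is compatible with brackets. Since the lemma is stated as classical and a reference (\cite{CF03}) is given, I would keep this brief: note that $\At(E) = \At(\Fr(E))$ by construction, recall the general fact that for any principal bundle $\Gamma(\At(P)) = \mathfrak{X}^1(P)^G$ as Lie algebras, and then identify $\mathfrak{X}^1(\Fr(E))^G$ with $\mathfrak{X}^1(E)_{\lin}$ via the argument above, observing that $[X,\mathcal{E}] = 0$ is the coordinate-free reformulation of fibre-wise linearity.
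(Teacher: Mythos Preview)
The paper does not actually prove this lemma: it is stated in the appendix as a classical fact, with only a blanket reference to \cite{CF03} for the results of that section. So there is no proof in the paper to compare against.

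Your proposal is a correct and standard route to the result. The identification $\Gamma(\At(P)) \simeq \mathfrak{X}^1(P)^G$ is immediate from the definition, and the passage from $G$-invariant vector fields on $\Fr(E)$ to linear vector fields on the associated bundle $E = \Fr(E)\times_G \rr^r$ is exactly the right mechanism. Your local-coordinate description of the inverse map is fine, and the observation that smoothness across the zero section forces a degree-$0$ homogeneous function to be fibrewise constant (and a degree-$1$ homogeneous function to be fibrewise linear) is what makes the characterisation $[X,\mathcal{E}]=0$ equivalent to linearity; you might state that explicitly, since without it the condition $[X,\mathcal{E}]=0$ only gives homogeneity. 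The bracket compatibility is automatic once you phrase both sides as vector fields on the same space (either $\Fr(E)$ or $E$) related by a smooth surjective submersion, so your worry about that step is not a real obstacle.
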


We are mainly interested in the case of metrics, for which we have the following descriptions:
\begin{lemma}\label{lem:metricautos}
Given a metric $g$ on a vector bundle $E$ let $Q$ denote the corresponding $O(n)$ structure group reduction of $\Fr(E)$. We have
\begin{align*}
\Gamma(\At (Q)) \simeq  \set{\tilde{X} \in \mathfrak{X}^1(E)_{\lin} : \mathcal{L}_{\tilde{X}}g = 0}.
\end{align*}
\end{lemma}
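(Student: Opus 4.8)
The plan is to prove that $\Gamma(\At(Q))$, for $Q$ the $O(n)$-reduction of $\Fr(E)$ associated to a metric $g$, coincides with the linear vector fields on $E$ preserving $g$. I will unwind the definitions so that the statement becomes a purely local computation in a fibre chart.

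First I would recall from the preceding lemma that $\Gamma(\At(E)) \simeq \mathfrak{X}^1(E)_{\lin}$, i.e.\ sections of $\At(\Fr(E))$ are exactly the fibre-wise linear vector fields on $E$. Under this identification, $\At(Q) \subseteq \At(\Fr(E))$ is the subalgebroid whose sections are the $G$-invariant vector fields on $\Fr(E)$ that are tangent to the submanifold $Q$; I want to translate ``tangent to $Q$'' into ``preserves $g$''. To do this, pick a local trivialisation $E|_U \simeq U \times \rr^n$ in which $g$ is the constant standard inner product (such a trivialisation exists by choosing a local $g$-orthonormal frame); then $Q|_U \simeq U \times O(n)$. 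A linear vector field $\tilde X$ on $E|_U$ has the form $\tilde X = \rho(\tilde X) + A(\cdot)\partial$, where $\rho(\tilde X)$ is a vector field on $U$ and $A\colon U \to \End(\rr^n) = \mathfrak{gl}_n$ is the fibre-linear part. The induced vector field on the frame bundle $\Fr(E)|_U \simeq U \times \GL_n$ is $\rho(\tilde X) + (A(\cdot)\,g)\,\partial_g$ where $g$ denotes the frame-matrix coordinate; this is tangent to $U \times O(n)$ precisely when, for $g \in O(n)$, the matrix $A(x)g$ lies in the tangent space $T_g O(n) = \{ M : g^{T}M + M^{T}g = 0\}$, which (using $g^{T}g = \id$) forces $A(x)^{T} + A(x) = 0$, i.e.\ $A(x) \in \mathfrak{o}(n)$. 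On the other hand, in this same trivialisation the Lie derivative $\mathcal{L}_{\tilde X} g$ of the constant metric is computed directly: differentiating $g(v,w)$ along the flow of $\tilde X$ gives a term involving only the linear part, namely $\mathcal{L}_{\tilde X}g\,(v,w) = \rho(\tilde X)\bigl(g(v,w)\bigr) - g(Av,w) - g(v,Aw) = -v^{T}(A^{T}+A)w$, since $g$ is constant. Hence $\mathcal{L}_{\tilde X}g = 0$ is equivalent to $A^{T} + A = 0$, which is exactly the condition found above.

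Having matched the two pointwise/local conditions, I would conclude by the usual sheaf-theoretic argument: both $\Gamma(\At(Q))$ and $\{\tilde X \in \mathfrak{X}^1(E)_{\lin} : \mathcal{L}_{\tilde X}g = 0\}$ are $C^\infty(M)$-submodules of $\mathfrak{X}^1(E)_{\lin} = \Gamma(\At(E))$, and the computation above shows that their local sections agree over each trivialising open set $U$ (both being the linear vector fields whose fibre part is $\mathfrak{o}(n)$-valued and whose base part is arbitrary); since agreeing locally implies agreeing as subsheaves, the two modules coincide. One should also check independence of the chosen orthonormal trivialisation, but this is automatic since the condition $\mathcal{L}_{\tilde X}g = 0$ is manifestly coordinate-free, as is the notion of being tangent to $Q$.

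The main obstacle is bookkeeping rather than conceptual: one must be careful about how a linear vector field on $E$ lifts to its frame bundle and how the $O(n)$-reduction $Q$ sits inside $\Fr(E)$, so that ``$G$-invariant vector field on $\Fr(E)$ tangent to $Q$'' is correctly identified with the skew-symmetry condition on the fibre part. Once the lift is written down explicitly in an orthonormal trivialisation, the identification $\mathcal{L}_{\tilde X}g = 0 \iff A + A^{T} = 0 \iff \tilde X$ tangent to $Q$ is immediate, and the rest is the standard passage from local to global via locally free sheaves.
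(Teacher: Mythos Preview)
Your proof is correct. The paper does not actually supply a proof of this lemma: it is stated in the appendix as a classical fact with a reference to the literature, so there is no argument to compare against. Your local computation in an orthonormal trivialisation, matching the skew-symmetry of the fibre part $A$ with both tangency to $Q \subset \Fr(E)$ and the vanishing of $\mathcal{L}_{\tilde X}g$, is exactly the standard way to establish this. One cosmetic point: you use $g$ both for the metric and for the matrix coordinate on $\GL_n$; renaming the latter would avoid any possible confusion.
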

\begin{lemma}\label{lem:confmetautos}
Given a conformal class of metrics $[g]$ let $Q$ denote the corresponding $O(n)\times \rr_+$ structure group reduction of $\Fr(E)$. We have
\begin{align*}
\Gamma(\At(Q)) \simeq  \set{\tilde{X} \in \mathfrak{X}^1(E)_{\lin} : \mathcal{L}_{\tilde{X}}g = \lambda g, ~ \text{for some }\lambda \in C^{\infty}(M)}.
\end{align*}
\end{lemma}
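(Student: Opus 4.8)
The plan is to mirror the proof of Lemma \ref{lem:metricautos}, replacing the orthogonal group $O(n)$ by the conformal group $O(n)\times\rr_+$. The organising principle is the following general fact about reductions: for any closed subgroup $G\subseteq\GL(n)$ and any $G$-reduction $Q\subseteq\Fr(E)$, the inclusion $Q\hookrightarrow\Fr(E)$ induces an injection of Lie algebroids $\At(Q)\hookrightarrow\At(\Fr(E))=\At(E)$, and under the identification $\Gamma(\At(E))\simeq\mathfrak{X}^1(E)_{\lin}$ recalled above, $\Gamma(\At(Q))$ is exactly the subspace of linear vector fields that are tangent to $Q$; equivalently, those whose (linear) flow $\phi_t$ preserves $Q$ as a subset of $\Fr(E)$. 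Thus the entire statement reduces to translating the phrase ``the flow of $\tilde X$ preserves $Q$'' into the infinitesimal conformal condition $\mathcal{L}_{\tilde X}g=\lambda g$.

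First I would make $Q$ explicit. The $O(n)\times\rr_+$-reduction attached to $[g]$ is the bundle of \emph{conformal frames}: a frame $(u_1,\ldots,u_n)$ lies in $Q$ precisely when $g(u_i,u_j)=c\,\delta_{ij}$ for some $c>0$, a condition manifestly independent of the representative of $[g]$ (which is also why the displayed condition $\mathcal{L}_{\tilde X}g=\lambda g$ is representative-independent). A linear vector field $\tilde X$ integrates to a flow $\phi_t$ of vector bundle automorphisms, each covering a diffeomorphism $\bar{\phi}_t$ of $M$ and acting linearly on fibres; it therefore acts on $\Fr(E)$ and on $\Gamma(\mathrm{Sym}^2 E^*)$ by pullback. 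The flow preserves conformal frames if and only if $\phi_t^*g=e^{f_t}g$ for some $f_t\in C^\infty(M)$, that is, the pullback metric stays in the class $[g]$.

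Differentiating $\phi_t^*g=e^{f_t}g$ at $t=0$ then yields $\mathcal{L}_{\tilde X}g=\lambda g$ with $\lambda=\tfrac{d}{dt}\big|_{t=0}f_t\in C^\infty(M)$, where $\mathcal{L}_{\tilde X}g:=\tfrac{d}{dt}\big|_{t=0}\phi_t^*g$ is a well-defined section of $\mathrm{Sym}^2 E^*$. Conversely, if $\mathcal{L}_{\tilde X}g=\lambda g$, then setting $g_t:=\phi_t^*g$ and using $\tfrac{d}{dt}g_t=\phi_t^*\mathcal{L}_{\tilde X}g=(\lambda\circ\bar{\phi}_t)\,g_t$, I read off that $g_t=e^{F_t}g$ with $F_t=\int_0^t\lambda\circ\bar{\phi}_s\,ds\in C^\infty(M)$; hence $\phi_t$ preserves conformal frames, so $\tilde X$ is tangent to $Q$. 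This establishes the claimed bijection at the level of vector fields.

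The main obstacle, and the only point genuinely new compared with the metric case, is verifying that the conformal factor $\lambda$ is a function pulled back from $M$ rather than an arbitrary function on $E$. This is exactly where linearity of $\tilde X$ is indispensable: since each $\phi_t$ restricts to a linear isomorphism on every fibre and covers the base map $\bar{\phi}_t$, the scalar relating $\phi_t^*g$ to $g$ is constant along fibres, so $f_t$ and hence $\lambda$ descend to $M$ --- the same mechanism already exploited in Lemma \ref{lem:restatiyah}. Finally I would record that the correspondence is not merely a linear bijection but an isomorphism of Lie algebroids: the bracket on $\Gamma(\At(Q))$ is the commutator of invariant vector fields on $Q$, which matches the commutator of the associated linear vector fields on $E$ under the identification $\Gamma(\At(E))\simeq\mathfrak{X}^1(E)_{\lin}$, so no bracket computation beyond that of the metric case is required.
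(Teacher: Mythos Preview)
The paper does not actually prove this lemma: it is stated in the appendix without proof, as one of several ``classical'' facts about Atiyah algebroids, with only a blanket reference to the literature. So there is no proof in the paper to compare against.

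Your argument is correct and is the standard one. You correctly invoke the general principle that for a $G$-reduction $Q\subseteq\Fr(E)$ the sections of $\At(Q)$ are precisely the linear vector fields whose flow preserves $Q$, you identify $Q$ explicitly as the bundle of conformal frames, and you translate ``flow preserves conformal frames'' into $\phi_t^*g\in[g]$ and then differentiate. The one point that is not entirely automatic --- that the conformal factor $\lambda$ lives on $M$ rather than on $E$ --- is exactly where linearity of $\tilde X$ enters, and you isolate this clearly: since $\phi_t$ is fibrewise linear, the pullback $\phi_t^*g$ is again a section of $\mathrm{Sym}^2E^*$, so the scalar relating it to $g$ is a function on $M$. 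The converse direction via integrating $\tfrac{d}{dt}g_t=(\lambda\circ\bar\phi_t)g_t$ is also fine. Nothing is missing.
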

The Atiyah algebroid cannot detect discrete parts of the structure group:
\begin{lemma}\label{lem:Atiyahliealg}
If $P$ is a principal $G$-bundle and $Q$ is a structure group reduction to $H$, and $\mathfrak{g} \simeq \mathfrak{h}$ then $\At(P) \simeq \At(Q)$.
\end{lemma}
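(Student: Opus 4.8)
The plan is to write down the natural comparison morphism $\At(Q)\to\At(P)$ and show it is an isomorphism using the Atiyah exact sequences. Since $Q$ is a reduction of the structure group, the inclusion $j\colon Q\hookrightarrow P$ is equivariant for the inclusion $H\hookrightarrow G$, so its differential $dj\colon TQ\to TP$ is $H$-equivariant and descends to a vector bundle map
\begin{equation*}
\At(j)\colon \At(Q)=TQ/H\longrightarrow TP/G=\At(P)
\end{equation*}
over $M$. As $dj$ is compatible with brackets of vector fields and the anchors of both algebroids are induced by the bundle projections to $M$, the map $\At(j)$ is a morphism of Lie algebroids covering $\id_M$.

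Next I would use the Atiyah sequences $0\to Q\times_H\mathfrak{h}\to\At(Q)\xrightarrow{\rho_Q}TM\to 0$ and $0\to P\times_G\mathfrak{g}\to\At(P)\xrightarrow{\rho_P}TM\to 0$, whose left-hand terms are the kernels of the anchors. Since $\At(j)$ intertwines $\rho_Q$ and $\rho_P$, and both are the projection differential, $\At(j)$ covers $\id_{TM}$ and restricts to a map $Q\times_H\mathfrak{h}\to P\times_G\mathfrak{g}$, producing a commuting ladder; by the five lemma it then suffices to show this restricted map is an isomorphism. For this, note that the reduction identifies $P$ with the associated bundle $Q\times_H G$, hence $P\times_G\mathfrak{g}\cong Q\times_H\mathfrak{g}$ (with $H$ acting via $\mathrm{Ad}|_H$), and under this identification the restricted map becomes the map $Q\times_H\mathfrak{h}\to Q\times_H\mathfrak{g}$ induced fibrewise by the inclusion $\mathfrak{h}\hookrightarrow\mathfrak{g}$. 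The hypothesis $\mathfrak{g}\simeq\mathfrak{h}$ forces this inclusion of subalgebras to be an equality, so the map is the identity, and we are done. One may avoid the five lemma altogether: $\At(Q)$ and $\At(P)$ are vector bundles over $M$ of the same rank $\dim M+\dim\mathfrak{g}$, and fixing $q\in Q$ over a point $x$ identifies $\At(Q)_x$ with $T_qQ$ and $\At(P)_x$ with $T_qP$ in such a way that $\At(j)_x$ becomes $dj_q$, which is injective; a fibrewise injective map of equal-rank bundles is an isomorphism.

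The only step requiring genuine care is the identification of the induced map on anchor kernels: one must check that the $H$-equivariant embedding $Q\hookrightarrow P$ induces on vertical parts precisely the inclusion $\mathfrak{h}\hookrightarrow\mathfrak{g}$, equivalently that the isomorphism $P\cong Q\times_H G$ is compatible with the adjoint bundles. Everything else is a formal diagram chase (or the dimension count above), so I do not expect any serious obstacle.
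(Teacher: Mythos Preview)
Your argument is correct. The paper does not actually prove this lemma: it is stated in the appendix as a classical fact about Atiyah algebroids, with a reference to the literature but no argument, so there is nothing to compare against.

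Both of your arguments work. The five-lemma route is fine once you observe, as you do, that an abstract isomorphism $\mathfrak h\simeq\mathfrak g$ together with $\mathfrak h\subset\mathfrak g$ forces $\mathfrak h=\mathfrak g$ in finite dimensions, so the map on adjoint bundles is the identity. The dimension count is cleaner still and is really all one needs: $j$ is an embedding, so $dj_q$ is injective; the hypothesis gives $\dim Q=\dim P$, hence equal ranks; and a fibrewise injective bundle map between bundles of equal rank is an isomorphism. Your description of why $\At(j)$ is a Lie algebroid morphism (compatibility of anchors via $\pi_P\circ j=\pi_Q$, and brackets via pushforward of invariant vector fields) is also accurate. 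The caveat you flag about identifying the map on vertical parts with the inclusion $\mathfrak h\hookrightarrow\mathfrak g$ is genuine but routine, and in any case is bypassed entirely by the rank argument.
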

\bibliographystyle{hyperamsplain-nodash}
\bibliography{references} 

\providecommand{\bysame}{\leavevmode\hbox to3em{\hrulefill}\thinspace}
\providecommand{\MR}{\relax\ifhmode\unskip\space\fi MR }
\providecommand{\MRhref}[2]{%
  \href{http://www.ams.org/mathscinet-getitem?mr=#1}{#2}
}
\providecommand{\href}[2]{#2}
\begin{thebibliography}{10}

\bibitem{CG17}
G.~R. Cavalcanti and M.~Gualtieri, \emph{Stable generalized complex
  structures}, \href{http://dx.doi.org/10.1112/plms.12093}{Proc. Lond. Math.
  Soc. \textbf{116} (2018)}, no.~5, 1075--1111.

\bibitem{CKW20}
G.~R. Cavalcanti, R.~L. Klaasse, and A.~Witte, \emph{Self-crossing stable
  generalized complex structures}, \href{http://arxiv.org/abs/2004.07559}{{\tt
  arXiv:2004.07559 [math.DG]}}.

\bibitem{CF03}
M.~Crainic and R.~L. Fernandes, \emph{Integrability of {L}ie brackets},
  \href{http://dx.doi.org/10.4007/annals.2003.157.575}{Ann. of Math. (2)
  \textbf{157} (2003)}, no.~2, 575--620.

\bibitem{Del71}
P.~Deligne, \emph{Th\'{e}orie de {H}odge. {II}}, Inst. Hautes \'{E}tudes Sci.
  Publ. Math. (1971), no.~40, 5--57.

\bibitem{G13}
R.~Goto, \emph{Unobstructed Deformations of Generalized Complex Structures
  Induced by $C^{\infty}$ Logarithmic Symplectic Structures and Logarithmic
  Poisson Structures}, Geometry and Topology of Manifolds (Tokyo) (A.~Futaki,
  R.~Miyaoka, Z.~Tang, and W.~Zhang, eds.), Springer Japan, 2016, pp.~159--183.

\bibitem{Gua07}
M.~Gualtieri, \emph{Generalized complex geometry},
  \href{http://dx.doi.org/10.4007/annals.2011.174.1.3}{Ann. of Math. (2)
  \textbf{174} (2011)}, no.~1, 75--123.

\bibitem{GMP14}
V.~Guillemin, E.~Miranda, and A.~R. Pires, \emph{Symplectic and Poisson
  geometry on b-manifolds},
  \href{http://dx.doi.org/https://doi.org/10.1016/j.aim.2014.07.032}{Advances
  in Mathematics \textbf{264} (2014)}, 864--896.

\bibitem{H03}
N.~Hitchin, \emph{Generalized {C}alabi-{Y}au manifolds},
  \href{http://dx.doi.org/10.1093/qjmath/54.3.281}{Q. J. Math. \textbf{54}
  (2003)}, no.~3, 281--308.

\bibitem{K18}
R.~L. Klaasse, \emph{Poisson structures of divisor-type},
  \href{http://arxiv.org/abs/1811.04226}{{\tt arXiv:1811.04226}}. 57 pages.

\bibitem{Lan21}
M.~Lanius, \emph{Symplectic, Poisson, and contact geometry on scattering
  manifolds}, \href{http://dx.doi.org/10.2140/pjm.2021.310.213}{Pacific Journal
  of Mathematics \textbf{310} (2021)}, no.~1, 213–256.

\bibitem{MS20}
E.~Miranda and G.~Scott, \emph{The geometry of $E$-manifolds},
  \href{http://dx.doi.org/10.4171/rmi/1232}{Revista Matemática Iberoamericana
  \textbf{37} (2020)}, no.~3, 1207–1224.

\bibitem{CM08}
I.~Moerdijk and M.~Crainic, \emph{Deformation of Lie brackets: cohomological
  aspects}, \href{http://dx.doi.org/10.4171/JEMS/139}{Journal of the European
  Mathematical Society \textbf{10} (2008)}.

\bibitem{Pym17}
B.~Pym, \emph{Constructions and classifications of projective Poisson
  varieties}, \href{http://dx.doi.org/10.1007/s11005-017-0984-5}{Letters in
  Mathematical Physics \textbf{108} (2017)}, no.~3, 573–632.

\end{thebibliography}
\end{document}